\newlength{\extramargin}
\newcommand{\Real}{\ensuremath{{\mathbb{R}}}}
\newcommand{\Complex}{\ensuremath{{\mathbb{C}}}}
\newcommand{\C}{\ensuremath{\mathcal C}}
\newcommand{\setL}{\ensuremath{\mathcal L}}
\renewcommand{\L}{\ensuremath{\mathcal L}}
\newcommand{\ex}{\ensuremath{{\mathbf{x}}}}
\newcommand{\zi}{\ensuremath{{\mathbf{z}}}}
\newcommand{\one}{\ensuremath{{\mathbf{1}}}}
\newcommand{\diag}{{\mbox{\rm diag}}}
\newtheorem{theorem}{Theorem}
\newtheorem{corollary}{Corollary}
\newtheorem{lemma}{Lemma}
\newtheorem{fact}{Fact}
\newtheorem{remark}{Remark}
\newtheorem{proposition}{Proposition}
\newenvironment{proof}{\noindent {\bf Proof.}}{\hfill \hspace*{1pt}\hfill$\blacksquare$}
\begin{document}
\title{Synchronization of small oscillations}
\author{S. Emre Tuna\footnote{The author is with Department of
Electrical and Electronics Engineering, Middle East Technical
University, 06800 Ankara, Turkey. Email: {\tt etuna@metu.edu.tr}}}
\maketitle

\begin{abstract}
Synchronization is studied in an array of identical oscillators
undergoing small vibrations. The overall coupling is described by a
pair of matrix-weighted Laplacian matrices; one representing the
dissipative, the other the restorative connectors. A construction is
proposed to combine these two real matrices in a single complex
matrix. It is shown that whether the oscillators synchronize in the
steady state or not depends on the number of eigenvalues of this
complex matrix on the imaginary axis. Certain refinements of this
condition for the special cases, where the restorative coupling is
either weak or absent, are also presented.
\end{abstract}

\section{Introduction}

Consider the dynamics \cite[Ch.~11]{lax96}
\begin{eqnarray}\label{eqn:unit}
M{\ddot x}+Kx=0
\end{eqnarray}
where $x\in\Real^{n}$ and the matrices $M,\,K\in\Real^{n\times n}$
are symmetric positive definite. This linear time-invariant
differential equation, being the generalization of that of harmonic
oscillator, plays an important role in mechanics. It emerges as the
linearization of a Lagrangian system about a stable equilibrium and
satisfactorily represents the behavior of the actual system
undergoing small oscillations \cite[Ch.~5]{arnold89}. Among examples
obeying \eqref{eqn:unit} are the $n$-link pendulum
(Fig.~\ref{fig:pendulum}) and the mass-spring system
(Fig.~\ref{fig:masspring}). It is possible to find relevant systems
outside the domain of mechanics as well. For instance, the LC
circuit shown in Fig.~\ref{fig:LCcircuit} is also described by the
form~\eqref{eqn:unit}; see \cite{tuna16}.

\begin{figure}[h]
\begin{center}
\includegraphics[scale=0.55]{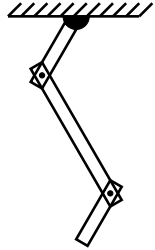}
\caption{3-link pendulum.}\label{fig:pendulum}
\end{center}
\end{figure}

\begin{figure}[h]
\begin{center}
\includegraphics[scale=0.45]{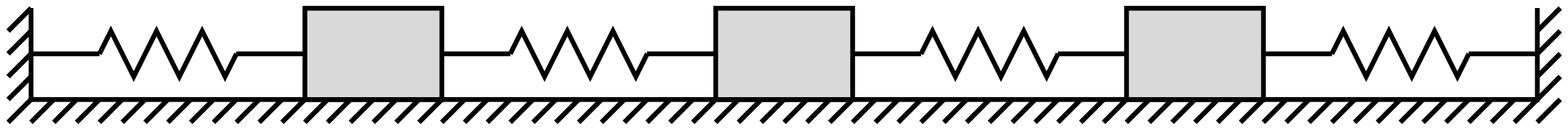}
\caption{Mass-spring system.}\label{fig:masspring}
\end{center}
\end{figure}

\begin{figure}[h]
\begin{center}
\includegraphics[scale=0.45]{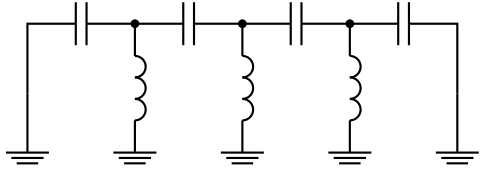}
\caption{LC oscillator.}\label{fig:LCcircuit}
\end{center}
\end{figure}

Suppose now we take a number of identical $n$-link pendulums, each
obeying \eqref{eqn:unit}, and couple them via passive components
such as springs and dampers as shown in Fig.~\ref{fig:pendcoupled}.
Or, we gather a number of identical LC circuits and connect them
through inductors and resistors as shown in
Fig.~\ref{fig:LCcoupled}. What can be said about the collective
behavior of these arrays? In this paper we attempt to answer this
question from the synchronization point of view. That is, we
investigate conditions on the coupling that guarantee asymptotic
synchronization throughout the array, where all the units tend to
oscillate in unison despite the initial differences in their
trajectories.

\begin{figure}[h]
\begin{center}
\includegraphics[scale=0.55]{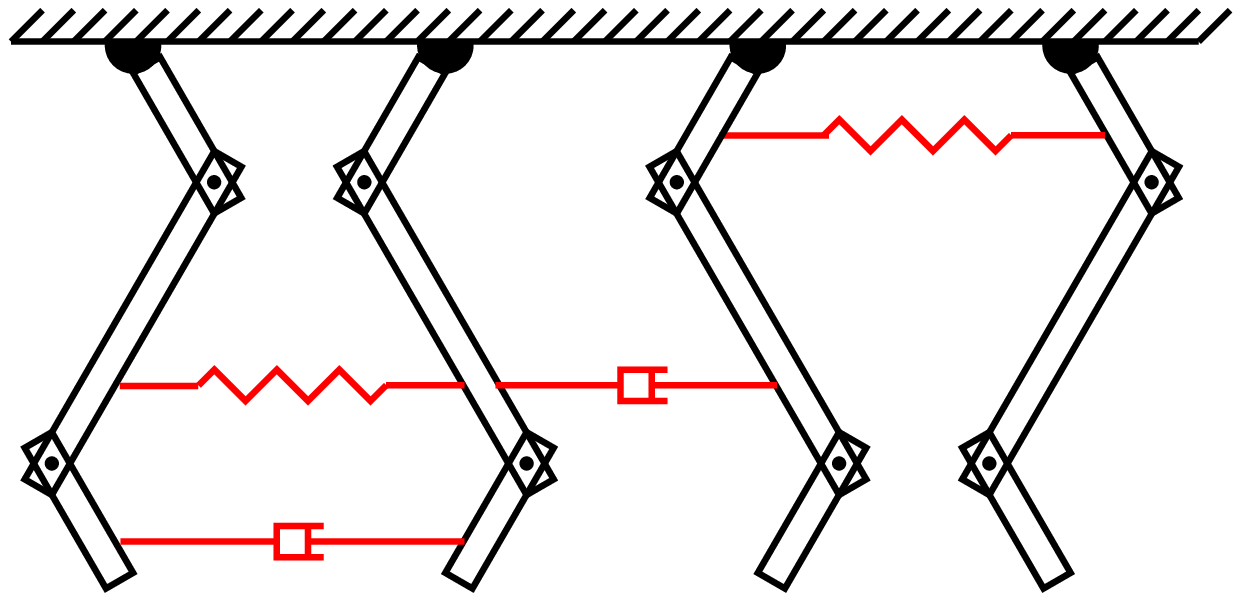}
\caption{Coupled 3-link pendulums.}\label{fig:pendcoupled}
\end{center}
\end{figure}

\begin{figure}[h]
\begin{center}
\includegraphics[scale=0.45]{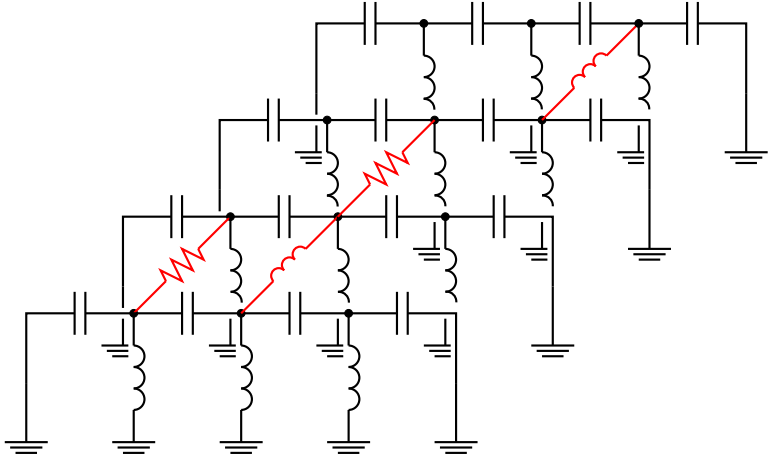}
\caption{Coupled LC oscillators.}\label{fig:LCcoupled}
\end{center}
\end{figure}

In studying synchronization stability the workhorse of the analysis
is the matrix that describes the overall coupling, the ubiquitous
{\em Laplacian}. The classical Laplacian matrix is a very useful
representation of a graph with scalar-weighted edges. This matrix
often appears in various network dynamics and its spectral
properties have proved instrumental in understanding or establishing
synchronization; see, for instance,
\cite{olfati04,li10,dorfler14,eroglu17}. Although a single
scalar-weighted Laplacian turns out to be quite able to represent
the coupling in many different networks (which have been thoroughly
investigated in the duly vast literature) significant exceptions do
exist. One such exception we find appropriate to point out has to do
with the case where the coupling can only be represented by a {\em
matrix-weighted} Laplacian \cite{tuna16,trinh18,tuna18}. Another
instance of deviation manifests itself in the array of harmonic
oscillators linked simultaneously by both dissipative and
restorative connectors \cite{tuna17}, where {\em two} separate
scalar-weighted Laplacians are required to account for the coupling
in its entirety; one for the restorative, the other for the
dissipative links. The particular problem we consider in this paper
happens to fit to neither of these instances and instead contains
them as special cases. Namely, the coupling of the array we study
here cannot be properly described except by a pair of
matrix-weighted Laplacians. To the best of our knowledge, the
problem of synchronization of small oscillations has not yet been
investigated under such direction and degree of generality. It is,
of course, worthwhile to ask whether the suggested generalization is
meaningful. In short, is it (in some sense) natural? We believe that
it is; for two reasons. First, as we mentioned already, the dynamics
we study can be realized by some very basic building blocks from
physics and engineering: pendulum, spring, damper; or, capacitor,
inductor, resistor. Second, some of the methods we develop in our
analysis bear strong resemblance to classical tools from systems
theory and graph theory, such as the Popov-Belevitch-Hautus (PBH)
test for observability and the positivity check of the second
smallest eigenvalue of the Laplacian for connectivity.

Somewhat imprecisely, we now give the statements of the three main
results of this paper. Our setup, the array of $q$ oscillators, is
described by three parameters (matrices): $P,\,L_{\rm d},\,L_{\rm
r}$. (The precise problem statement and notation are given in
Section~\ref{sec:PS}.) The symmetric positive definite matrix
$P\in\Real^{n\times n}$ models the individual oscillator, where $n$
is the number of normal modes or characteristic frequencies. The
matrix-weighted Laplacians $L_{\rm d},\,L_{\rm r}\in\Real^{qn\times
qn}$ represent, respectively, the dissipative coupling (e.g.,
dampers) and the restorative coupling (e.g., springs). Inspired by
how the conductance ($g$) and susceptance ($b$) are brought together
to form the admittance ($y=g+jb$) in circuit theory \cite{desoer69},
we construct from our three matrices the single matrix $[L_{\rm
d}+j([I_{q}\otimes P]+L_{\rm r})]$. In Section~\ref{sec:SS} we
establish the following equivalence between this matrix and
synchrony: {\em The oscillators (asymptotically) synchronize if and
only if $[L_{\rm d}+j([I_{q}\otimes P]+L_{\rm r})]$ has exactly $n$
eigenvalues on the imaginary axis.} To develop a somewhat deeper
understanding of this result we then dissect the matrix-weighted
Laplacians $L_{\rm d},\,L_{\rm r}$ using the eigenvectors
$v_{1},\,v_{2},\,\ldots,\,v_{n}$ of $P$ and obtain the collections
of scalar-weighted Laplacians
$G_{11},\,G_{22},\,\ldots,\,G_{nn}\in\Real^{q\times q}$ and
$B_{11},\,B_{22},\,\ldots,\,B_{nn}\in\Real^{q\times q}$ through
$G_{kk}=[I_{q}\otimes v_{k}^{T}]L_{\rm d}[I_{q}\otimes v_{k}]$ and
$B_{kk}=[I_{q}\otimes v_{k}^{T}]L_{\rm r}[I_{q}\otimes v_{k}]$.
These matrices are employed in Section~\ref{sec:WR} to show: {\em
For weak enough restorative coupling} ($\|L_{\rm r}\|\ll 1$) {\em
the oscillators synchronize if every $[G_{kk}+jB_{kk}]$ has a single
eigenvalue on the imaginary axis.} Finally, in Section~\ref{sec:PD},
we study the pure dissipative coupling scenario. There we find: {\em
In the absence of restorative coupling $(L_{\rm r}=0)$ the
oscillators synchronize if and only if every $G_{kk}$ has a single
eigenvalue at the origin.}

\section{Problem statement and notation}\label{sec:PS}

Consider the array of $q$ coupled oscillators (each of order $2n$)
of the form
\begin{eqnarray}\label{eqn:array}
M{\ddot x}_{i}+Kx_{i}+\sum_{j=1}^{q}D_{ij}({\dot x}_{i}-{\dot
x}_{j})+\sum_{j=1}^{q} R_{ij}(x_{i}-x_{j})=0\,,\qquad
i=1,\,2,\,\ldots,\,q
\end{eqnarray}
where $x_{i}\in\Real^{n}$ and
$M,\,K,\,D_{ij},\,R_{ij}\in\Real^{n\times n}$. Recall that
$M=M^{T}>0$ and $K=K^{T}>0$. The matrices
$D_{ij}^{T}=D_{ij}=D_{ji}\geq 0$ represent the dissipative coupling
(due, e.g., to the dampers in the array of
Fig.~\ref{fig:pendcoupled} or to the resistors in the array of
Fig.~\ref{fig:LCcoupled}) between the $i$th and $j$th oscillators.
The matrices $R_{ij}^{T}=R_{ij}=R_{ji}\geq 0$ represent the
restorative coupling (due, e.g., to the springs in the array of
Fig.~\ref{fig:pendcoupled} or to the inductors in the array of
Fig.~\ref{fig:LCcoupled}) between the $i$th and $j$th oscillators.
(We take $D_{ii}=0$ and $R_{ii}=0$.) Let
$\sigma_{1},\,\sigma_{2},\,\ldots,\,\sigma_{n}$ be the roots of the
polynomial $d(s)=\det(sM-K)$, i.e., the eigenvalues of $K$ with
respect to $M$. Note that these $\sigma_{k}$ are also the
eigenvalues of the matrix $P:=M^{-1/2}KM^{-1/2}$. Hence
$\sigma_{k}>0$ for all $k$ because $P=P^{T}>0$. Our analysis will
assume that these eigenvalues are distinct:
$\sigma_{k}\neq\sigma_{\ell}$ for $k\neq \ell$. Under this
assumption we here intend to arrive at conditions on the set of
parameters $(M,\,K,\,(D_{ij})_{i,j=1}^{q},\,(R_{ij})_{i,j=1}^{q})$
under which the array~\eqref{eqn:array} {\em synchronizes}, i.e.,
$\|x_{i}(t)-x_{j}(t)\|\to 0$ as $t\to\infty$ for all indices $i,\,j$
and all initial conditions
$x_{1}(0),\,x_{2}(0),\,\ldots,\,x_{q}(0)$.

The identity matrix is denoted by $I_{q}\in\Real^{q\times q}$. We
let $\one_{q}\in\Real^{q}$ denote the unit vector with identical
positive entries, i.e., $\one_{q} = [1\ 1\ \cdots\ 1]^{T}/\sqrt{q}$.
Given $X\in\Complex^{n\times n}$, we let $\lambda_{k}(X)$ denote the
$k$th smallest eigenvalue of $X$ with respect to the real part. That
is, ${\rm Re}\,\lambda_{1}(X)\leq{\rm
Re}\,\lambda_{2}(X)\leq\cdots\leq{\rm Re}\,\lambda_{n}(X)$. The
2-norm of a vector $v\in\Complex^{n}$ is denoted by $\|v\|$. Recall
that $\|v\|^{2}=v^{*}v$, where $v^{*}$ denotes the conjugate
transpose of $v$. Likewise, $\|X\|$ denotes the induced 2-norm of
the matrix $X$. Let $\L(q,\,n)\subset\Real^{qn\times qn}$ denote the
set of {\em Laplacian} matrices such that each $L\in\L(q,\,n)$ has
the following structure
\begin{eqnarray*}
L =
\left[\begin{array}{cccc}\sum_{j}W_{1j}&-W_{12}&\cdots&-W_{1q}\\
-W_{21}&\sum_{j}W_{2j}&\cdots&-W_{2q}\\
\vdots&\vdots&\ddots&\vdots\\
-W_{q1}&-W_{q2}&\cdots&\sum_{j}W_{qj}
\end{array}\right]=:{\rm lap}\,(W_{ij})_{i,j=1}^{q}
\end{eqnarray*}
where the {\em weights} $W_{ij}\in\Real^{n\times n}$ satisfy
$W_{ij}^{T}=W_{ij}=W_{ji}\geq 0$ with $W_{ii}=0$. Observe the
symmetry $L=L^{T}$ and the positive semidefiniteness
$\ex^{*}L\ex=\sum_{j>i}(x_{i}-x_{j})^{*}W_{ij}(x_{i}-x_{j})\geq 0$,
where $\ex=[x_{1}^{T}\ x_{2}^{T}\ \cdots\
x_{q}^{T}]^{T}\in(\Complex^{n})^{q}$. Also ${\rm null}\,L\supset{\rm
range}\,[\one_{q}\otimes I_{n}]$, where $\otimes$ is the Kronecker
product symbol.

All the positive (semi)definite matrices we consider in this paper
will be (real and) symmetric. Therefore henceforth we write $X>0$
($X\geq 0$) to mean $X^{T}=X>0$ ($X^{T}=X\geq 0$). A simple fact
from linear algebra that we frequently use in our analysis is
\begin{eqnarray*}
X\geq 0\ \ \mbox{and}\ \ \xi^{*}X\xi=0\implies X\xi=0
\end{eqnarray*}
where $\xi$ is a vector of appropriate size. Another fact that will
receive frequent visits is the following.
\begin{fact}\label{fact:the}
Let both $X,\,Y\in\Real^{n\times n}$ be symmetric positive
semidefinite. Then ${\rm Re}\,\lambda_{k}(X+jY)\geq 0$ for all $k$.
\end{fact}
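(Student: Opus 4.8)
The plan is to reduce the claim to a statement about a single eigenpair and then exploit the positive semidefiniteness of $X$ and $Y$ on that eigenvector. Let $\lambda = \alpha + j\beta$ be an eigenvalue of $X+jY$ with a (complex, nonzero) eigenvector $\xi \in \Complex^{n}$, normalized so that $\xi^{*}\xi = 1$. Then $(X+jY)\xi = \lambda\xi$, and left-multiplying by $\xi^{*}$ gives $\xi^{*}X\xi + j\,\xi^{*}Y\xi = \lambda$. Since $X$ and $Y$ are real symmetric, both quadratic forms $\xi^{*}X\xi$ and $\xi^{*}Y\xi$ are real; hence matching real parts yields $\alpha = {\rm Re}\,\lambda = \xi^{*}X\xi$.

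Now the conclusion is immediate: because $X \geq 0$, we have $\xi^{*}X\xi \geq 0$ for every vector $\xi$, so $\alpha \geq 0$. As this holds for every eigenvalue $\lambda$ of $X+jY$, in particular it holds for the one indexed $k$ in the ordering by real part, giving ${\rm Re}\,\lambda_{k}(X+jY) \geq 0$ for all $k$.

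There is essentially no obstacle here: the only point requiring a moment's care is that $\xi^{*}X\xi$ is genuinely real, which follows from $X^{T} = X$ together with $X$ being real (so $X^{*} = X$), whence $(\xi^{*}X\xi)^{*} = \xi^{*}X^{*}\xi = \xi^{*}X\xi$; the same remark applies to $Y$. One could alternatively phrase the argument via the Hermitian/skew-Hermitian decomposition of $X+jY$ — here $X = {\rm Re}\,(X+jY)$ is already the Hermitian part and $jY$ the skew-Hermitian part — and invoke the standard fact that the numerical range of a matrix contains its spectrum, with the numerical range's real part equal to the numerical range of the Hermitian part; but the direct computation above is shorter and self-contained.
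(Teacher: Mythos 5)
Your proposal is correct and follows essentially the same route as the paper: take a unit eigenvector $\xi$, left-multiply the eigenvalue equation by $\xi^{*}$ to get $\lambda=\xi^{*}X\xi+j\xi^{*}Y\xi$, and read off ${\rm Re}\,\lambda=\xi^{*}X\xi\geq 0$ from $X\geq 0$. The extra remarks on why the quadratic forms are real and on the numerical-range viewpoint are fine but not needed.
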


\begin{proof}
Let $\lambda\in\Complex$ be an eigenvalue of $X+jY$ and
$\xi\in\Complex^{n}$ the corresponding unit eigenvector. We can
write
\begin{eqnarray}\label{eqn:lambda}
\lambda=\xi^{*}(\lambda\xi)=\xi^{*}(X+jY)\xi=\xi^{*}X\xi+j\xi^{*}Y\xi
\end{eqnarray}
which yields ${\rm Re}\,\lambda=\xi^{*}X\xi\geq0$ because
$X,\,Y\geq0$. The fact follows since $\lambda$ was arbitrary.
\end{proof}

\section{Steady state solutions}\label{sec:SS}

Consider the array of coupled pendulums shown in
Fig.~\ref{fig:pendcoupled} under arbitrary initial conditions.
Devoid of any external interference, this assembly is unable to
generate mechanical energy. Moreover, some of its initial energy
will be gradually lost through the dampers as heat. The outcome is
that in the long run the array has to settle into a constant energy
state, the {\em steady state}. One way to show that the array
synchronizes (if it does) therefore would be to establish that no
steady state solution admits asynchronous oscillations. This is the
approach we adopt for our analysis in this section.

Let us employ the coordinate change $z_{i}:=M^{1/2}x_{i}$ for
$i=1,\,2,\,\ldots,\,q$. In the new coordinates, the
array~\eqref{eqn:array} takes the form
\begin{eqnarray}\label{eqn:array2}
{\ddot z}_{i}+Pz_{i}+\sum_{j=1}^{q}M^{-1/2}D_{ij}M^{-1/2}({\dot
z}_{i}-{\dot z}_{j})+\sum_{j=1}^{q}
M^{-1/2}R_{ij}M^{-1/2}(z_{i}-z_{j})=0\,,\qquad
i=1,\,2,\,\ldots,\,q\,.
\end{eqnarray}
Recall that $P=M^{-1/2}KM^{-1/2}$ whose eigenvalues
$\sigma_{1},\,\sigma_{2},\,\ldots,\,\sigma_{n}$ are distinct and
positive. Let $\zi=[z_{1}^{T}\ z_{2}^{T}\ \cdots\ z_{q}^{T}]^{T}$
and the matrices $L_{\rm d},\,L_{\rm r}\in\setL(q,\,n)$ be
constructed as
\begin{eqnarray*}
L_{\rm d}&:=&{\rm lap}\,(M^{-1/2}D_{ij}M^{-1/2})_{i,j=1}^{q}\,,\\
L_{\rm r}&:=&{\rm lap}\,(M^{-1/2}R_{ij}M^{-1/2})_{i,j=1}^{q}\,.
\end{eqnarray*}
These Laplacian matrices allow us to express \eqref{eqn:array2} as
\begin{eqnarray}\label{eqn:bigz}
{\ddot\zi}+[I_{q}\otimes P]\zi+L_{\rm d}{\dot\zi}+L_{\rm r}\zi=0\,.
\end{eqnarray}
Note that the array~\eqref{eqn:array} synchronizes (only) when the
array~\eqref{eqn:array2} does. And the synchronization of the
array~\eqref{eqn:array2} is equivalent to that every solution
$\zi(t)$ of \eqref{eqn:bigz} converges to the subspace ${\rm
range}\,[\one_{q}\otimes I_{n}]$. Consider now the Lyapunov function
\begin{eqnarray*}
W(\zi,\,\dot\zi)=\frac{1}{2}\zi^{T}\left([I_{q}\otimes P]+L_{\rm
r}\right)\zi+\frac{1}{2}{\dot\zi}^{T}{\dot\zi}
\end{eqnarray*}
which is positive definite since $[I_{q}\otimes P]>0$ and $L_{\rm
r}\geq 0$ imply $[I_{q}\otimes P]+L_{\rm r}>0$. The time derivative
of this function along the solutions of \eqref{eqn:bigz} reads
\begin{eqnarray*}
\frac{d}{dt}W(\zi(t),\,\dot\zi(t)) = -\dot\zi(t)^{T}L_{\rm
d}\dot\zi(t)\,.
\end{eqnarray*}
Note that the righthand side is negative semidefinite since $L_{\rm
d}\geq 0$. Hence by Lyapunov stability theorem each pair
$(\zi(t),\,\dot\zi(t))$ is bounded and by Krasovskii-LaSalle
principle \cite{khalil96}, every solution converges to some region
contained in the set $\{(\zi,\,\dot\zi):\dot W(\zi,\,\dot\zi)=0\}$.
In other words, every steady state solution $\zi_{\rm ss}(t)$ of
\eqref{eqn:bigz} should identically satisfy $\dot\zi_{\rm
ss}(t)^{T}L_{\rm d}\dot\zi_{\rm ss}(t)=0$, which (thanks to $L_{\rm
d}\geq 0$) is equivalent to
\begin{eqnarray}\label{eqn:Ldz0}
L_{\rm d}\dot\zi_{\rm ss}(t)\equiv 0\,.
\end{eqnarray}
Combining $\eqref{eqn:bigz}$ and \eqref{eqn:Ldz0} at once yields
\begin{eqnarray}\label{eqn:bigzss}
{\ddot\zi}_{\rm ss}+([I_{q}\otimes P]+L_{\rm r})\zi_{\rm ss}=0\,.
\end{eqnarray}
Let $p\leq qn$ be the number of distinct eigenvalues of
$[I_{q}\otimes P]+L_{\rm r}$  and
$\rho_{1},\,\rho_{2},\,\ldots,\,\rho_{p}>0$ denote these
eigenvalues. Note that $\rho_{k}>0$ because $[I_{q}\otimes P]+L_{\rm
r}>0$. Now, the solution to \eqref{eqn:bigzss} has the form \cite[\S
23]{arnold89}
\begin{eqnarray}\label{eqn:arnold}
\zi_{\rm ss}(t)={\rm Re}\sum_{k=1}^{p}e^{j\omega_{k}t}\xi_{k}
\end{eqnarray}
where $\omega_{k}=\sqrt{\rho_{k}}$ are distinct and positive, and
each $\xi_{k}\in(\Complex^{n})^{q}$ (some of which may be zero)
satisfies
\begin{eqnarray}\label{eqn:arnold2}
([I_{q}\otimes P]+L_{\rm r}-\omega_{k}^{2}I_{qn})\xi_{k}=0\,.
\end{eqnarray}
Note that the \eqref{eqn:Ldz0} and \eqref{eqn:arnold} imply
\begin{eqnarray}\label{eqn:arnold3}
L_{\rm d}\xi_{k}=0
\end{eqnarray}
since $\omega_{k}$ are distinct and nonzero. Combining
\eqref{eqn:arnold2} and \eqref{eqn:arnold3} we can write
\begin{eqnarray}\label{eqn:arnold4}
\xi_{k}\in{\rm null}\left[\begin{array}{c}[I_{q}\otimes P]+L_{\rm r}-\omega_{k}^{2} I_{qn}\\
L_{\rm d}\end{array}\right]\,.
\end{eqnarray}
Suppose now the following (PBH test like) condition holds
\begin{eqnarray}\label{eqn:PBH}
{\rm null}\left[\begin{array}{c}[I_{q}\otimes P]+L_{\rm r}-\lambda I_{qn}\\
L_{\rm d}\end{array}\right]\subset {\rm range}\,[\one_{q}\otimes
I_{n}]\ \ \mbox{for all}\ \ \lambda\in\Real\,.
\end{eqnarray}
Then \eqref{eqn:arnold4} implies $\xi_{k}\in{\rm range}\,[{\bf
1}_{q}\otimes I_{n}]$ for all $k$. By \eqref{eqn:arnold} this
readily yields $\zi_{\rm ss}(t)\in{\rm range}\,[{\bf 1}_{q}\otimes
I_{n}]$ for all $t$. Therefore \eqref{eqn:PBH} is sufficient for the
array~\eqref{eqn:array} to synchronize.

Let us also investigate the necessity. We begin by supposing that
the condition~\eqref{eqn:PBH} fails to hold. Then we can find an
eigenvalue $\rho_{k}=\omega_{k}^{2}$ and an eigenvector
$\xi\in(\Real^{n})^{q}$ satisfying $\xi\notin{\rm range}\,
[\one_{q}\otimes I_{n}]$, $L_{\rm d}\xi=0$, and $([I_{q}\otimes
P]+L_{\rm r}-\omega_{k}^{2} I_{qn})\xi=0$. Using the pair
$(\omega_{k},\,\xi)$ let us construct the function
$\zeta:\Real\to(\Real^{n})^{q}$ as $\zeta(t)={\rm
Re}\,(e^{j\omega_{k} t}\xi)$. This function satisfies the following
properties. First, since $\xi\notin{\rm range}\,[\one_{q}\otimes
I_{n}]$, we have
\begin{eqnarray}\label{eqn:vball1}
\zeta(t)=\xi\notin{\rm range}\,[{\bf 1}_{q}\otimes I_{n}]\ \
\mbox{for}\ \ t=0,\,T,\,2T,\,\ldots
\end{eqnarray}
where $T=2\pi/\omega_{k}$. Second, since $L_{\rm d}\xi=0$, we have
at all times
\begin{eqnarray}\label{eqn:vball2}
L_{\rm d}\dot{\zeta}(t)={\rm Re}\,(j\omega_{k}e^{j\omega_{k}
t}L_{\rm d}\xi)=0\,.
\end{eqnarray}
Third, since $([I_{q}\otimes P]+L_{\rm r}-\omega_{k}^{2}
I_{qn})\xi=0$, we can write at all times
\begin{eqnarray*}
{\ddot \zeta}(t)+([I_{q}\otimes P]+L_{\rm
r})\zeta(t)&=&-\omega_{k}^{2}\zeta(t)+([I_{q}\otimes P]+L_{\rm r})\zeta(t)\\
&=&([I_{q}\otimes P]+L_{\rm r}-\omega_{k}^{2}I_{qn}){\rm Re}\,(e^{j\omega_{k} t}\xi)\\
&=&{\rm Re}\,(e^{j\omega_{k} t}([I_{q}\otimes P]+L_{\rm r}-\omega_{k}^{2}I_{qn})\xi)\\
&=&0
\end{eqnarray*}
which together with \eqref{eqn:vball2} leads to
\begin{eqnarray*}
{\ddot \zeta}(t)+[I_{q}\otimes P]\zeta(t)+L_{\rm
d}\dot{\zeta}(t)+L_{\rm r}\zeta(t)\equiv0\,.
\end{eqnarray*}
Hence $\zeta(t)$ is a valid solution of \eqref{eqn:bigz}. But it is
clear from \eqref{eqn:vball1} that $\zeta(t)$ does not converge to
${\rm range}\,[{\bf 1}_{q}\otimes I_{n}]$. This means that the
condition~\eqref{eqn:PBH} is not only sufficient but also necessary
for the synchronization of the array~\eqref{eqn:array}. We have
therefore established:

\begin{lemma}\label{lem:PBH}
The array~\eqref{eqn:array} synchronizes if and only if
\eqref{eqn:PBH} holds.
\end{lemma}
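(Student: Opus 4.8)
The statement is exactly the conclusion the preceding discussion has been building toward, so the plan is to assemble the two halves already prepared above into a clean equivalence. Throughout, write $\R:={\rm range}\,[\one_{q}\otimes I_{n}]$ and recall that, after the coordinate change $z_{i}=M^{1/2}x_{i}$, the array \eqref{eqn:array} synchronizes if and only if every solution $\zi(t)$ of \eqref{eqn:bigz} converges to $\R$.

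For the ``if'' direction I would assume \eqref{eqn:PBH} and run the Lyapunov--LaSalle argument with $W(\zi,\dot\zi)=\frac12\zi^{T}([I_{q}\otimes P]+L_{\rm r})\zi+\frac12\dot\zi^{T}\dot\zi$: boundedness of trajectories together with $\dot W=-\dot\zi^{T}L_{\rm d}\dot\zi\le 0$ forces every solution into the largest invariant set $\M$ contained in $\{(\zi,\dot\zi):L_{\rm d}\dot\zi=0\}$. The crux is to identify $\M$. A trajectory that stays in $\M$ satisfies $L_{\rm d}\dot\zi(t)\equiv 0$, so \eqref{eqn:bigz} collapses along it to \eqref{eqn:bigzss}, whose solutions are the modal sums \eqref{eqn:arnold}; because the frequencies $\omega_{k}$ are distinct and nonzero, the identity $L_{\rm d}\dot\zi\equiv 0$ can be separated mode by mode to yield $L_{\rm d}\xi_{k}=0$, hence $\xi_{k}$ lies in the null space of \eqref{eqn:arnold4}. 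Now \eqref{eqn:PBH} forces every $\xi_{k}\in\R$, so $\M$ lies (in its $\zi$-component) inside $\R$; since every solution converges to $\M$, synchronization follows.

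For the ``only if'' direction I would argue contrapositively. If \eqref{eqn:PBH} fails, there are a real $\lambda$ and a nonzero $\xi$ with $([I_{q}\otimes P]+L_{\rm r}-\lambda I_{qn})\xi=0$, $L_{\rm d}\xi=0$, and $\xi\notin\R$; since $[I_{q}\otimes P]+L_{\rm r}$ is real, symmetric and positive definite, $\xi$ may be taken real and $\lambda=\omega^{2}$ with $\omega>0$. Then $\zeta(t):={\rm Re}(e^{j\omega t}\xi)$ is a genuine solution of \eqref{eqn:bigz}: the coupling term $L_{\rm d}\dot\zeta$ vanishes identically because $L_{\rm d}\xi=0$, and what remains is exactly \eqref{eqn:bigzss}, satisfied because $\xi$ is an eigenvector. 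Yet $\zeta(mT)=\xi\notin\R$ for $T=2\pi/\omega$ and all $m\in\Natural$, so $\zeta$ does not approach $\R$ and the array does not synchronize.

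The step I expect to be the real work is pinning down the invariant set $\M$ in the ``if'' part: LaSalle only hands over convergence to the largest invariant subset of $\{\dot W=0\}$, and converting that into synchronization needs the modal description \eqref{eqn:arnold}, the mode-by-mode extraction of $L_{\rm d}\xi_{k}=0$ (which is where distinctness and non-vanishing of the $\omega_{k}$ are used), and only then an appeal to \eqref{eqn:PBH}. The ``only if'' part is comparatively mechanical; the one point worth checking is that a failure of \eqref{eqn:PBH} over $\Real$ genuinely produces a real eigenvector of $[I_{q}\otimes P]+L_{\rm r}$ at a positive eigenvalue, which is immediate from real symmetry and positive definiteness.
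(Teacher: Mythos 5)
Your proposal is correct and follows essentially the same route as the paper: the Lyapunov--LaSalle reduction to steady states satisfying $L_{\rm d}\dot\zi\equiv 0$, the modal decomposition \eqref{eqn:arnold} with mode-by-mode extraction of $L_{\rm d}\xi_{k}=0$ for sufficiency, and the explicit periodic counterexample solution $\zeta(t)={\rm Re}(e^{j\omega t}\xi)$ for necessity. Your added remarks---invoking the largest invariant set explicitly and justifying that a failure of \eqref{eqn:PBH} yields a real eigenvector at a positive eigenvalue---are correct refinements of points the paper treats more briefly.
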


We now convert the condition~\eqref{eqn:PBH} to another form, which
will prove more suitable for later analysis. To this end, we
construct the complex matrix
\begin{eqnarray*}
\Gamma:=L_{\rm d}+j([I_{q}\otimes P]+L_{\rm r})\,.
\end{eqnarray*}
A few observations on the spectrum of $\Gamma$ are in order. Note
that $L_{\rm d}[\one_{q}\otimes v]=0$ for all $v\in\Complex^{n}$
thanks to $L_{\rm d}\in\setL(q,\,n)$. The same goes for $L_{\rm r}$.
Letting $v_{1},\,v_{2},\,\ldots,\,v_{n}\in\Real^{n}$ be the
(linearly independent) unit eigenvectors of $P$ corresponding to the
eigenvalues $\sigma_{1},\,\sigma_{2},\,\ldots,\,\sigma_{n}$,
respectively, we can thus write for $k=1,\,2,\,\ldots,\,n$
\begin{eqnarray*}
\Gamma [\one_{q}\otimes v_{k}] &=& L_{\rm d}[\one_{q}\otimes
v_{k}]+j([I_{q}\otimes P][\one_{q}\otimes v_{k}]+L_{\rm
r}[\one_{q}\otimes v_{k}])\\
&=& j[I_{q}\otimes P][\one_{q}\otimes v_{k}]\\
&=& j[(I_{q}\one_{q})\otimes (Pv_{k})]\\
&=& j\sigma_{k}[\one_{q}\otimes v_{k}]\,.
\end{eqnarray*}
Therefore each $j\sigma_{k}$ is an eigenvalue of $\Gamma$ with the
corresponding eigenvector $[\one_{q}\otimes v_{k}]$. Since
$\sigma_{k}\neq\sigma_{\ell}$ for $k\neq \ell$ and the open left
half plane contains no eigenvalue of $\Gamma$ by
Fact~\ref{fact:the}; we can list, without loss of generality, the
first $n$ eigenvalues as $\lambda_{k}(\Gamma)=j\sigma_{k}$ for
$k=1,\,2,\,\ldots,\,n$. It turns out that the next eigenvalue in
line is closely related to synchronization:

\begin{lemma}\label{lem:lambda2}
The condition~\eqref{eqn:PBH} holds if and only if ${\rm
Re}\,\lambda_{n+1}(\Gamma)>0$.
\end{lemma}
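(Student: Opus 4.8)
\emph{Plan.} The statement is most naturally read as a spectral counting assertion, so the first move is to rephrase it that way. As already established above, $\lambda_{k}(\Gamma)=j\sigma_{k}$ for $k=1,\dots,n$ with the distinct values $\sigma_{k}>0$, and by Fact~\ref{fact:the} no eigenvalue of $\Gamma$ sits in the open left half plane; consequently ${\rm Re}\,\lambda_{n+1}(\Gamma)>0$ holds if and only if $\Gamma$ has \emph{exactly} $n$ eigenvalues on the imaginary axis, counted with algebraic multiplicity. So the lemma reduces to: \eqref{eqn:PBH} holds iff $\Gamma$ has no imaginary eigenvalue beyond the $j\sigma_{k}$'s and each $j\sigma_{k}$ is algebraically simple.

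The key preliminary is a characterization of the imaginary eigenvectors of $\Gamma$. If $\Gamma\xi=j\mu\,\xi$ with $\xi$ a unit vector and $\mu\in\Real$, then taking real parts in $j\mu=\xi^{*}\Gamma\xi=\xi^{*}L_{\rm d}\xi+j\,\xi^{*}([I_{q}\otimes P]+L_{\rm r})\xi$ gives $\xi^{*}L_{\rm d}\xi=0$, hence $L_{\rm d}\xi=0$ (as $L_{\rm d}\geq0$), and then $([I_{q}\otimes P]+L_{\rm r})\xi=\mu\xi$; conversely any such $\xi$ is an imaginary eigenvector. Thus the imaginary eigenvectors of $\Gamma$ are precisely the nonzero vectors lying in ${\rm null}\,[\,([I_{q}\otimes P]+L_{\rm r}-\lambda I_{qn})^{T}\ \ L_{\rm d}^{T}\,]^{T}$ for some $\lambda\in\Real$ — the very null spaces appearing in \eqref{eqn:PBH}.

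For the necessity of \eqref{eqn:PBH} I would argue the contrapositive. If \eqref{eqn:PBH} fails, there are $\mu\in\Real$ and a nonzero $\xi\notin{\rm range}\,[\one_{q}\otimes I_{n}]$ with $L_{\rm d}\xi=0$ and $([I_{q}\otimes P]+L_{\rm r})\xi=\mu\xi$, so $\Gamma\xi=j\mu\,\xi$. If $\mu\notin\{\sigma_{1},\dots,\sigma_{n}\}$ then $j\mu$ is an imaginary eigenvalue of $\Gamma$ distinct from all the $j\sigma_{k}$; if $\mu=\sigma_{m}$ for some $m$, then $\xi$ and $[\one_{q}\otimes v_{m}]$ are linearly independent (the latter lies in ${\rm range}\,[\one_{q}\otimes I_{n}]$, the former does not), so the eigenspace of $j\sigma_{m}$ has dimension at least two. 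Either way $\Gamma$ has at least $n+1$ imaginary eigenvalues counted with multiplicity, i.e.\ ${\rm Re}\,\lambda_{n+1}(\Gamma)=0$. For sufficiency, assume \eqref{eqn:PBH}: every imaginary eigenvector $\xi$ of $\Gamma$ then lies in ${\rm range}\,[\one_{q}\otimes I_{n}]$, say $\xi=[\one_{q}\otimes w]$; since $L_{\rm r}[\one_{q}\otimes w]=0$, the relation $([I_{q}\otimes P]+L_{\rm r})\xi=\mu\xi$ collapses to $Pw=\mu w$, forcing $\mu=\sigma_{k}$ and $w\in{\rm span}\,\{v_{k}\}$ for some $k$. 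Hence the only imaginary eigenvalues are $j\sigma_{1},\dots,j\sigma_{n}$, each of geometric multiplicity one.

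What remains — and what I expect to be the only nonroutine point — is upgrading ``geometric'' to ``algebraic'' at each $j\sigma_{k}$, since \eqref{eqn:PBH} a priori constrains only eigenvectors. The trick is to exhibit the matching left eigenvector: $\Gamma^{*}[\one_{q}\otimes v_{k}]=L_{\rm d}[\one_{q}\otimes v_{k}]-j([I_{q}\otimes P]+L_{\rm r})[\one_{q}\otimes v_{k}]=-j\sigma_{k}[\one_{q}\otimes v_{k}]$, so $[\one_{q}\otimes v_{k}]^{*}$ is a left eigenvector of $\Gamma$ for $j\sigma_{k}$, and it is not orthogonal to the corresponding right eigenvector since $\|[\one_{q}\otimes v_{k}]\|^{2}=1$. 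A Jordan block of size $\geq2$ is therefore impossible: a generalized eigenvector $\eta$ with $(\Gamma-j\sigma_{k}I_{qn})\eta=[\one_{q}\otimes v_{k}]$ would give $1=[\one_{q}\otimes v_{k}]^{*}[\one_{q}\otimes v_{k}]=\big([\one_{q}\otimes v_{k}]^{*}(\Gamma-j\sigma_{k}I_{qn})\big)\eta=0$, a contradiction. Thus each $j\sigma_{k}$ is algebraically simple, $\Gamma$ has exactly $n$ eigenvalues on the imaginary axis, and ${\rm Re}\,\lambda_{n+1}(\Gamma)>0$. Everything outside the left-eigenvector observation is bookkeeping in the spirit of the computations already carried out in this section.
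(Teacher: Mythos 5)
Your proposal is correct and follows essentially the same route as the paper: both rest on the observation that $\xi^{*}\Gamma\xi$ forces $L_{\rm d}\xi=0$ for any imaginary eigenvector (so these eigenvectors are exactly the nonzero elements of the PBH null spaces), and both exclude a Jordan block at $j\sigma_{k}$ by pairing a putative generalized eigenvector against $[\one_{q}\otimes v_{k}]$ using the symmetry of $\Gamma$. The only difference is organizational --- you deploy the Jordan-block argument in the sufficiency direction (to upgrade geometric to algebraic simplicity), whereas the paper deploys the same computation in the contrapositive of the other direction (to manufacture a second linearly independent eigenvector) --- and this does not change the substance.
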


\begin{proof}
Suppose ${\rm Re}\,\lambda_{n+1}(\Gamma)\leq 0$. This implies ${\rm
Re}\,\lambda_{n+1}(\Gamma)=0$ because $\Gamma$ can have no
eigenvalue with negative real part. Let therefore
$\lambda_{n+1}(\Gamma)=j\beta$ with $\beta\in\Real$. There are two
possibilities. Either (i) $j\beta=j\sigma_{k}$ for some $k$ or (ii)
not. Consider the case (i). Without loss of generality let us take
$j\beta=j\sigma_{1}$. That is, the eigenvalue $j\sigma_{1}$ is
repeated. Then there should be at least two linearly independent
eigenvectors of $\Gamma$ corresponding to the eigenvalue
$j\sigma_{1}$. To see this suppose otherwise. Then $[\one_{q}\otimes
v_{1}]$ would be the only (unit) eigenvector associated to the
eigenvalue $j\sigma_{1}$ and there would have to exist a generalized
eigenvector $\xi_{1}\in(\Complex^{n})^{q}$ satisfying
$(\Gamma-j\sigma_{1}I_{qn})\xi_{1}=[\one_{q}\otimes v_{1}]$. This
however would lead to the following contradiction
\begin{eqnarray*}
1 =[\one_{q}\otimes v_{1}]^{T}[\one_{q}\otimes v_{1}]
=[\one_{q}\otimes v_{1}]^{T}(\Gamma-j\sigma_{1}I_{qn})\xi_{1}
=((\Gamma-j\sigma_{1}I_{qn})[\one_{q}\otimes v_{1}])^{T}\xi_{1} =0
\end{eqnarray*}
since $\Gamma^{T}=\Gamma$. Therefore we can find an eigenvector
$\xi\in(\Complex^{n})^{q}$ corresponding to the eigenvalue
$j\sigma_{1}$ satisfying $\xi\notin{\rm span}\,\{[\one_{q}\otimes
v_{1}]\}$. Then it follows that
\begin{eqnarray}\label{eqn:xi}
\xi\notin{\rm span}\,\{[\one_{q}\otimes v_{1}],\,[\one_{q}\otimes
v_{2}],\,\ldots,\,[\one_{q}\otimes v_{n}]\}={\rm
range}\,[\one_{q}\otimes I_{n}]\,.
\end{eqnarray}
As for the case (ii), i.e., $j\beta\neq j\sigma_{k}$ for all $k$, it
is clear that an eigenvector of $j\beta$, call it $\xi$, should
again satisfy \eqref{eqn:xi}. To sum up, whenever ${\rm
Re}\,\lambda_{n+1}(\Gamma)\leq 0$, there exists a nonzero vector
$\xi\in(\Complex^{n})^{q}$ and a real number $\beta$ satisfying
$\Gamma\xi=j\beta\xi$ and \eqref{eqn:xi}. Without loss of generality
let $\|\xi\|=1$. Then \eqref{eqn:lambda} allows us to write
\begin{eqnarray*}
j\beta = \xi^{*}L_{\rm d}\xi+j\xi^{*}([I_{q}\otimes P]+L_{\rm r})\xi
\end{eqnarray*}
which implies $L_{\rm d}\xi=0$. Then we can write
$j\beta\xi=\Gamma\xi=j([I_{q}\otimes P]+L_{\rm r})\xi$, yielding
$([I_{q}\otimes P]+L_{\rm r}-\beta I_{qn})\xi=0$. Hence $\xi$
satisfies
\begin{eqnarray}\label{eqn:xinull}
\xi\in{\rm null}\left[\begin{array}{c}[I_{q}\otimes P]+L_{\rm r}-\beta I_{qn}\\
L_{\rm d}\end{array}\right]\,.
\end{eqnarray}
Finally, combining \eqref{eqn:xi} and \eqref{eqn:xinull} yields that
the condition~\eqref{eqn:PBH} cannot be true.

Now we show the other direction. Suppose \eqref{eqn:PBH} is not
true. Then we can find $\beta\in\Real$ and a vector $\xi\notin{\rm
range}\,[\one_{q}\otimes I_{n}]$ that satisfy $L_{\rm d}\xi=0$ and
$([I_{q}\otimes P]+L_{\rm r})\xi=\beta\xi$. This yields
$\Gamma\xi=j\beta\xi$. By \eqref{eqn:xi} we see that $\xi$ lies
outside the subspace spanned by the linearly independent
eigenvectors $[\one_{q}\otimes v_{1}],\,[\one_{q}\otimes
v_{2}],\,\ldots,\,[\one_{q}\otimes v_{n}]$ of $\Gamma$. Recall that
the eigenvalues associated to these eigenvectors are
$j\sigma_{1},\,j\sigma_{2},\,\ldots,\,j\sigma_{n}$. Hence, together
with $\xi$, there are at least $n+1$ linearly independent
eigenvectors whose eigenvalues lie on the imaginary axis. This
implies ${\rm Re}\,\lambda_{n+1}(\Gamma)$ cannot be strictly
positive.
\end{proof}

\vspace{0.12in}

Lemma~\ref{lem:PBH} and Lemma~\ref{lem:lambda2} yield:

\begin{theorem}\label{thm:main}
The array~\eqref{eqn:array} synchronizes if and only if ${\rm
Re}\,\lambda_{n+1}(\Gamma)>0$.
\end{theorem}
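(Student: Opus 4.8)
The plan is short, because the theorem follows immediately from the two lemmas already in hand. Lemma~\ref{lem:PBH} equates synchronization of the array~\eqref{eqn:array} with the PBH-type condition~\eqref{eqn:PBH}, and Lemma~\ref{lem:lambda2} equates~\eqref{eqn:PBH} with the spectral inequality ${\rm Re}\,\lambda_{n+1}(\Gamma)>0$. So I would simply chain the two biconditionals: the array~\eqref{eqn:array} synchronizes $\iff$ \eqref{eqn:PBH} holds $\iff$ ${\rm Re}\,\lambda_{n+1}(\Gamma)>0$, which is the assertion to be proved.

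There is no genuine obstacle remaining at this stage; all the work has been done upstream, and it is worth recalling where it sits. On one side, the Lyapunov / Krasovskii-LaSalle argument reduces synchronization to the behavior of the conservative reduced dynamics~\eqref{eqn:bigzss}, and then the modal decomposition~\eqref{eqn:arnold}--\eqref{eqn:arnold4} converts that behavior into the nullspace condition~\eqref{eqn:PBH}. On the other side, Lemma~\ref{lem:lambda2} repackages~\eqref{eqn:PBH} as a statement about the eigenstructure of $\Gamma=L_{\rm d}+j([I_{q}\otimes P]+L_{\rm r})$, using $\Gamma^{T}=\Gamma$ to exclude Jordan blocks at the known eigenvalues $j\sigma_{1},\dots,j\sigma_{n}$ and Fact~\ref{fact:the} to exclude eigenvalues in the open left half plane. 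The theorem itself merely records the transitivity of these equivalences.

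Finally, I would note how this matches the ``exactly $n$ eigenvalues on the imaginary axis'' phrasing from the introduction: the $n$ numbers $j\sigma_{1},\dots,j\sigma_{n}$ are always eigenvalues of $\Gamma$, with eigenvectors $[\one_{q}\otimes v_{k}]$, and since $\Gamma$ has no eigenvalue strictly to the left of the imaginary axis, ${\rm Re}\,\lambda_{n+1}(\Gamma)>0$ says precisely that no further eigenvalue lies on the axis. Hence the count ``exactly $n$'' and the inequality on $\lambda_{n+1}(\Gamma)$ are the same condition, and the proof consists of nothing more than invoking Lemma~\ref{lem:PBH} followed by Lemma~\ref{lem:lambda2}.
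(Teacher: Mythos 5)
Your proposal is correct and is exactly the paper's own argument: the author derives Theorem~\ref{thm:main} by simply combining Lemma~\ref{lem:PBH} and Lemma~\ref{lem:lambda2}, precisely as you chain the two biconditionals. Your closing remark connecting ${\rm Re}\,\lambda_{n+1}(\Gamma)>0$ to the ``exactly $n$ eigenvalues on the imaginary axis'' phrasing is also consistent with the paper's discussion preceding Lemma~\ref{lem:lambda2}.
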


To develop some insight on Theorem~\ref{thm:main} we bring up some
of its consequences concerning a number of special yet important
cases. We first regenerate some known results on harmonic
oscillators; then (in the following sections) we proceed to novel
implications. Synchronization of coupled harmonic oscillators (i.e.,
the array~\eqref{eqn:array} under $n=1$) is a thoroughly
investigated problem; see, for instance,
\cite{ren08,su09,zhou12,sun15}. Many interesting results have
appeared recently, each of which studies a certain generalization of
the nominal setup: an array of identical oscillators (e.g., 1-link
pendulums) coupled only by dissipative components (e.g., dampers).
In this simplest case synchronization is easy to understand. It is
intuitively clear that if a pair of pendulums are connected by a
damper then their motions have to have synchronized in the steady
state. Consequently, the entire array synchronizes if its
interconnection {\em graph} (where each node represents an
oscillator and each edge a damper) is connected. This well-known,
fundamental result makes the first corollary of
Theorem~\ref{thm:main} since the algebraic condition for a graph to
be connected is that its Laplacian has a simple eigenvalue at the
origin, i.e., its second smallest eigenvalue (also known as Fiedler
eigenvalue) is positive.

\begin{corollary}\label{cor:d}
Suppose $n=1$ and $R_{ij}=0$ for all $i,\,j$. Then the
array~\eqref{eqn:array} synchronizes if and only if
$\lambda_{2}(L_{\rm d})>0$.
\end{corollary}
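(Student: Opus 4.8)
The plan is simply to specialize Theorem~\ref{thm:main} to the case $n=1$, $L_{\rm r}=0$. When $n=1$ the matrix $P=M^{-1/2}KM^{-1/2}$ is a positive scalar, namely $P=\sigma_{1}>0$, so that $I_{q}\otimes P=\sigma_{1}I_{q}$. Since every $R_{ij}=0$, we have $L_{\rm r}={\rm lap}\,(0)_{i,j=1}^{q}=0$. Hence the complex matrix governing the spectral test collapses to $\Gamma=L_{\rm d}+j\sigma_{1}I_{q}$.

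The next step is to read off the spectrum of this $\Gamma$. Because $L_{\rm d}\in\L(q,1)$ is a (scalar-weighted) Laplacian, it is real, symmetric, and positive semidefinite; let $0\leq\mu_{1}\leq\mu_{2}\leq\cdots\leq\mu_{q}$ be its eigenvalues, so that in particular $\mu_{2}=\lambda_{2}(L_{\rm d})$. Adding $j\sigma_{1}I_{q}$ shifts every eigenvalue by the common amount $j\sigma_{1}$ and leaves the eigenvectors untouched, so the eigenvalues of $\Gamma$ are exactly $\mu_{k}+j\sigma_{1}$ for $k=1,\ldots,q$, and therefore ${\rm Re}\,\lambda_{k}(\Gamma)=\mu_{k}$ for each $k$ in the paper's ordering by real part (this is consistent with $\lambda_{1}(\Gamma)=j\sigma_{1}$ observed just before Lemma~\ref{lem:lambda2}).

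With $n=1$ we have $n+1=2$, hence ${\rm Re}\,\lambda_{n+1}(\Gamma)={\rm Re}\,\lambda_{2}(\Gamma)=\mu_{2}=\lambda_{2}(L_{\rm d})$, and Theorem~\ref{thm:main} gives that the array synchronizes if and only if $\lambda_{2}(L_{\rm d})>0$, which is the assertion. I do not anticipate any genuine obstacle; the only point deserving a moment's attention is checking that the ordering of the eigenvalues of $\Gamma$ by real part coincides with the ascending ordering of the eigenvalues of $L_{\rm d}$, which is immediate since the imaginary shift $j\sigma_{1}$ is identical for all of them, so $\lambda_{n+1}(\Gamma)$ in the paper's convention does pick out $\lambda_{2}(L_{\rm d})$.
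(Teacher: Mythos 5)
Your proposal is correct and follows essentially the same route as the paper: specialize Theorem~\ref{thm:main} to $n=1$, observe that $\Gamma=L_{\rm d}+j\sigma_{1}I_{q}$ is the real symmetric Laplacian shifted uniformly by $j\sigma_{1}$, so ${\rm Re}\,\lambda_{2}(\Gamma)=\lambda_{2}(L_{\rm d})$. Your explicit remark about the compatibility of the real-part ordering with the ascending ordering of the $\mu_{k}$ is a point the paper passes over silently, but both arguments are the same.
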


\begin{proof}
That $n=1$ renders the matrix $P$ a real scalar. In particular,
$P=\sigma_{1}$. We can therefore write
\begin{eqnarray}\label{eqn:ruedes}
{\rm Re}\,\lambda_{n+1}(\Gamma)
&=& {\rm Re}\,\lambda_{n+1}(L_{\rm d}+j([I_{q}\otimes P]+L_{\rm r}))\nonumber\\
&=& {\rm Re}\,\lambda_{2}(L_{\rm d}+j\sigma_{1}I_{q}+jL_{\rm r})\nonumber\\
&=& {\rm Re}\,(\lambda_{2}(L_{\rm d}+jL_{\rm r})+j\sigma_{1})\nonumber\\
&=& {\rm Re}\,\lambda_{2}(L_{\rm d}+jL_{\rm r})\,.
\end{eqnarray}
Now, $R_{ij}=0$ yields $L_{\rm r}=0$. Also, $L_{\rm d}\geq 0$
implies that all the eigenvalues of the Laplacian $L_{\rm d}$ are
real. Hence
\begin{eqnarray}\label{eqn:anges}
{\rm Re}\,\lambda_{2}(L_{\rm d}+jL_{\rm r})=\lambda_{2}(L_{\rm
d})\,.
\end{eqnarray}
The result follows by \eqref{eqn:ruedes}, \eqref{eqn:anges}, and
Theorem~\ref{thm:main}.
\end{proof}

\vspace{0.12in}

Corollary~\ref{cor:d} has the following generalization covering the
case where the 1-link pendulums are coupled by not only dampers, but
also springs \cite{tuna17}.

\begin{corollary}\label{cor:dr}
Suppose $n=1$. Then the array~\eqref{eqn:array} synchronizes if and
only if ${\rm Re}\,\lambda_{2}(L_{\rm d}+jL_{\rm r})>0$.
\end{corollary}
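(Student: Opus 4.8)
The plan is to reduce Corollary~\ref{cor:dr} to Theorem~\ref{thm:main} by the same computation used in the proof of Corollary~\ref{cor:d}, but without invoking the hypothesis $R_{ij}=0$. First I would set $n=1$, which makes $P$ a real scalar $\sigma_1>0$ and $[I_q\otimes P]=\sigma_1 I_q$. Then I would write $\Gamma=L_{\rm d}+j(\sigma_1 I_q+L_{\rm r})=(L_{\rm d}+jL_{\rm r})+j\sigma_1 I_q$. Adding the scalar multiple $j\sigma_1$ of the identity shifts every eigenvalue of $L_{\rm d}+jL_{\rm r}$ by $j\sigma_1$, hence leaves real parts untouched and preserves the ordering by real part; in particular $\lambda_{k}(\Gamma)=\lambda_{k}(L_{\rm d}+jL_{\rm r})+j\sigma_1$ for every $k$, so ${\rm Re}\,\lambda_{n+1}(\Gamma)={\rm Re}\,\lambda_{2}(\Gamma)={\rm Re}\,\lambda_{2}(L_{\rm d}+jL_{\rm r})$. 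This is exactly the chain \eqref{eqn:ruedes} in the previous proof, with the final step dropped since we no longer get to conclude $L_{\rm r}=0$.

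With that identity in hand the corollary is immediate: by Theorem~\ref{thm:main} the array synchronizes if and only if ${\rm Re}\,\lambda_{n+1}(\Gamma)>0$, and the displayed equality turns this into ${\rm Re}\,\lambda_{2}(L_{\rm d}+jL_{\rm r})>0$. So the whole proof is really just the observation that the $[I_q\otimes P]$ term contributes a pure imaginary scalar shift when $n=1$, which commutes with everything and is invisible to the real-part ordering.

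I do not anticipate a genuine obstacle here; the only point that needs a word of care is the claim that adding $j\sigma_1 I_q$ preserves the indexing $\lambda_k$. This holds because, by definition, $\lambda_k$ orders eigenvalues by nondecreasing real part, and translating every eigenvalue by the same imaginary number leaves all real parts (and their order, counted with multiplicity) unchanged; the eigenvectors are likewise unchanged, so there is no subtlety about multiplicities or Jordan structure. One should also note, as in Corollary~\ref{cor:d}, that $\lambda_2(L_{\rm d}+jL_{\rm r})$ is well defined (its real part is nonnegative by Fact~\ref{fact:the}, since $L_{\rm d},L_{\rm r}\geq 0$), so the inequality ${\rm Re}\,\lambda_2(L_{\rm d}+jL_{\rm r})>0$ is the natural analogue of the Fiedler-eigenvalue condition. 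In short, the proof is a two-line specialization of \eqref{eqn:ruedes} together with Theorem~\ref{thm:main}.
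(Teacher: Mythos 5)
Your proposal is correct and follows exactly the paper's own route: the paper proves this corollary by citing the chain \eqref{eqn:ruedes} (which, as you note, does not use $R_{ij}=0$) together with Theorem~\ref{thm:main}. Your added remark about why the imaginary scalar shift preserves the real-part ordering of eigenvalues is a fair elaboration of a step the paper leaves implicit, but it is not a different argument.
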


\begin{proof}
Combine \eqref{eqn:ruedes} and Theorem~\ref{thm:main}.
\end{proof}

\section{Weak restorative coupling}\label{sec:WR}

In this section we study the synchronization of small oscillations
under weak restorative coupling. To investigate how the strength of
restorative coupling effects synchronization let us replace $R_{ij}$
in \eqref{eqn:array} with $\varepsilon R_{ij}$, yielding the
dynamics
\begin{eqnarray}\label{eqn:arrayeps}
M{\ddot x}_{i}+Kx_{i}+\sum_{j=1}^{q}D_{ij}({\dot x}_{i}-{\dot
x}_{j})+\varepsilon\sum_{j=1}^{q}R_{ij}(x_{i}-x_{j})=0\,,\qquad
i=1,\,2,\,\ldots,\,q
\end{eqnarray}
where the scalar $\varepsilon>0$ represents the coupling strength.
Our assumptions on the matrices $M,\,K,\,D_{ij},\,R_{ij}$ are same
as before. A slight addition, however, is that we assume throughout
this section that not all $R_{ij}$ are zero, i.e., $R_{ij}\neq0$ for
at least one pair $(i,\,j)$. The case where there is no restorative
coupling (i.e., all $R_{ij}=0$) is studied in the next section. For
our new array~\eqref{eqn:arrayeps} let us define
\begin{eqnarray*}
\Gamma_{\varepsilon}:=L_{\rm d}+j([I_{q}\otimes P]+\varepsilon
L_{\rm r})\,.
\end{eqnarray*}
We infer from Theorem~\ref{thm:main} that the
array~\eqref{eqn:arrayeps} synchronizes if and only if ${\rm
Re}\,\lambda_{n+1}(\Gamma_{\varepsilon})>0$. Recall that
$v_{1},\,v_{2},\,\ldots,\,v_{n}\in\Real^{n}$ denote the (linearly
independent) unit eigenvectors of $P$ corresponding to the distinct
eigenvalues $\sigma_{1},\,\sigma_{2},\,\ldots,\,\sigma_{n}$,
respectively. Since $P$ is real and symmetric the matrix $V=[v_{1}\
v_{2}\ \cdots\ v_{n}]$ is orthogonal, i.e., $V^{T}V=I_{n}$. Let
$\Lambda=\diag(\sigma_{1},\,\sigma_{2},\,\ldots,\,\sigma_{n})$. Note
that $\Lambda=V^{T}PV$. Let us now construct the matrices
$G,\,B\in\Real^{qn\times qn}$ as
\begin{eqnarray*}
G = \left[
\begin{array}{cccc}
G_{11}&G_{12}&\cdots&G_{1n}\\
G_{21}&G_{22}&\cdots&G_{2n}\\
\vdots&\vdots&\ddots&\vdots\\
G_{n1}&G_{n2}&\cdots&G_{nn}
\end{array}
\right]\ ,\qquad B = \left[
\begin{array}{cccc}
B_{11}&B_{12}&\cdots&B_{1n}\\
B_{21}&B_{22}&\cdots&B_{2n}\\
\vdots&\vdots&\ddots&\vdots\\
B_{n1}&B_{n2}&\cdots&B_{nn}
\end{array}
\right]
\end{eqnarray*}
where $G_{k\ell}=[I_{q}\otimes v_{k}^{T}]L_{\rm d}[I_{q}\otimes
v_{\ell}]\in\Real^{q\times q}$ and $B_{k\ell}=[I_{q}\otimes
v_{k}^{T}]L_{\rm r}[I_{q}\otimes v_{\ell}]\in\Real^{q\times q}$ for
$k,\,\ell=1,\,2,\,\ldots,\,n$.

\begin{lemma}\label{lem:Gkk}
The matrices $G_{kk},\,B_{kk}$ are Laplacian, i.e.,
$G_{kk},\,B_{kk}\in\setL(q,\,1)$ for all $k=1,\,2,\,\ldots,\,n$.
\end{lemma}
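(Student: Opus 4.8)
The plan is to verify directly that $G_{kk}$ (and identically $B_{kk}$) has the defining structure of an element of $\setL(q,1)$: it must be writable as ${\rm lap}\,(w_{ij})_{i,j=1}^q$ for nonnegative scalar weights $w_{ij}=w_{ji}\geq 0$ with $w_{ii}=0$. Since $L_{\rm d}={\rm lap}\,(M^{-1/2}D_{ij}M^{-1/2})_{i,j=1}^q$ is built from the symmetric positive semidefinite blocks $\tilde D_{ij}:=M^{-1/2}D_{ij}M^{-1/2}$, the natural candidate weights are the scalars $w_{ij}:=v_k^T\tilde D_{ij}v_k$. First I would write out the $(i,i')$ block of $G_{kk}=[I_q\otimes v_k^T]L_{\rm d}[I_q\otimes v_k]$: using the block form of $L_{\rm d}$, the off-diagonal block ($i\neq i'$) is $v_k^T(-\tilde D_{ii'})v_k=-w_{ii'}$, and the diagonal block is $v_k^T\big(\sum_{j}\tilde D_{ij}\big)v_k=\sum_j w_{ij}$. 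This is exactly ${\rm lap}\,(w_{ij})$, so the structural form is immediate once the bookkeeping of the Kronecker products is done.

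It then remains to check the three conditions on the weights. Symmetry $w_{ij}=w_{ji}$ follows from $\tilde D_{ij}=\tilde D_{ji}$ (inherited from $D_{ij}=D_{ji}$ and the symmetry of $M^{-1/2}$). The vanishing of the diagonal, $w_{ii}=v_k^T\tilde D_{ii}v_k=0$, follows from $D_{ii}=0$. Nonnegativity $w_{ij}=v_k^T\tilde D_{ij}v_k\geq 0$ follows because $D_{ij}\geq 0$ forces $\tilde D_{ij}=M^{-1/2}D_{ij}M^{-1/2}\geq 0$ (a congruence preserves positive semidefiniteness), and a quadratic form of a positive semidefinite matrix is nonnegative. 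Each scalar weight is also real since $v_k$ is real and $\tilde D_{ij}$ is real symmetric. The argument for $B_{kk}$ is word-for-word the same with $R_{ij}$ and $L_{\rm r}$ in place of $D_{ij}$ and $L_{\rm d}$.

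There is essentially no hard step here; the only thing to be careful about is the index manipulation of $[I_q\otimes v_k^T]\,L\,[I_q\otimes v_k]$ — one should note that this operation acts block-entrywise, sending each $n\times n$ block $W_{ij}$ of $L$ to the scalar $v_k^T W_{ij}v_k$, so that ${\rm lap}$ commutes with it: $[I_q\otimes v_k^T]\,{\rm lap}\,(W_{ij})\,[I_q\otimes v_k]={\rm lap}\,(v_k^T W_{ij}v_k)$. I would state this small observation explicitly and then the lemma follows in a couple of lines. (Note that only the diagonal blocks $G_{kk},B_{kk}$ are Laplacian in general; the off-diagonal $G_{k\ell}$ need not be, because $v_k^T W_{ij}v_\ell$ need not be nonnegative — but the lemma only claims the diagonal case.)
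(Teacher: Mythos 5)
Your proposal is correct and follows essentially the same route as the paper: both hinge on the identity $[I_q\otimes v_k^T]\,{\rm lap}\,(W_{ij})\,[I_q\otimes v_k]={\rm lap}\,(v_k^T W_{ij} v_k)$, with the paper leaving the verification that the scalar weights $v_k^T M^{-1/2}D_{ij}M^{-1/2}v_k$ are symmetric, nonnegative, and zero on the diagonal implicit, while you spell it out. No substantive difference.
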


\begin{proof}
We can write
\begin{eqnarray}\label{eqn:Gkklap}
G_{kk}
&=&[I_{q}\otimes v_{k}^{T}]L_{\rm d}[I_{q}\otimes v_{k}]\nonumber\\
&=&[I_{q}\otimes v_{k}^{T}]({\rm lap}\,(M^{-1/2}D_{ij}M^{-1/2})_{i,j=1}^{q})[I_{q}\otimes v_{k}]\nonumber\\
&=&{\rm lap}\,(v_{k}^{T}M^{-1/2}D_{ij}M^{-1/2}v_{k})_{i,j=1}^{q}\,.
\end{eqnarray}
Likewise, $B_{kk}={\rm
lap}\,(v_{k}^{T}M^{-1/2}R_{ij}M^{-1/2}v_{k})_{i,j=1}^{q}$.
\end{proof}

\vspace{0.12in}

It is not difficult to see that the matrices $G,\,B$ satisfy
\begin{subeqnarray*}
G&=&\Pi^{T}[I_{q}\otimes V^{T}]L_{\rm d}[I_{q}\otimes V]\Pi\\
B&=&\Pi^{T}[I_{q}\otimes V^{T}]L_{\rm r}[I_{q}\otimes V]\Pi
\end{subeqnarray*}
where $\Pi\in\Real^{qn\times qn}$ is the permutation matrix that
yields $\Pi^{T}[X\otimes Y]\Pi=Y\otimes X$ for all
$X\in\Real^{q\times q}$ and $Y\in\Real^{n\times n}$. Hence
$G,\,B\geq 0$. Define
\begin{eqnarray*}
\Omega_{\varepsilon}:=G+j([\Lambda\otimes I_{q}]+\varepsilon B)\,.
\end{eqnarray*}
Note that $\Omega_{\varepsilon}=\Pi^{T}[I_{q}\otimes
V^{T}]\Gamma_{\varepsilon}[I_{q}\otimes V]\Pi$. That is,
$\Omega_{\varepsilon}$ and $\Gamma_{\varepsilon}$ are similar
matrices. Therefore they share the same eigenvalues. Since the
array~\eqref{eqn:arrayeps} synchronizes if and only if ${\rm
Re}\,\lambda_{n+1}(\Gamma_{\varepsilon})>0$, we have the following
result.

\begin{proposition}\label{prop:swap}
The array~\eqref{eqn:arrayeps} synchronizes if and only if ${\rm
Re}\,\lambda_{n+1}(\Omega_{\varepsilon})>0$.
\end{proposition}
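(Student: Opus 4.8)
The plan is to prove Proposition~\ref{prop:swap} by exhibiting an explicit similarity transformation between $\Gamma_{\varepsilon}$ and $\Omega_{\varepsilon}$, so that the conclusion follows immediately from Theorem~\ref{thm:main} (applied to the array~\eqref{eqn:arrayeps}). Since $V=[v_{1}\ v_{2}\ \cdots\ v_{n}]$ is orthogonal and $[I_{q}\otimes V]$ is therefore orthogonal as well (hence invertible), and since the permutation matrix $\Pi$ is orthogonal, the matrix $U:=[I_{q}\otimes V]\Pi$ is invertible. The claim is that $\Omega_{\varepsilon}=U^{-1}\Gamma_{\varepsilon}U$, from which $\Gamma_{\varepsilon}$ and $\Omega_{\varepsilon}$ have identical spectra (with multiplicities), so ${\rm Re}\,\lambda_{n+1}(\Gamma_{\varepsilon})>0$ if and only if ${\rm Re}\,\lambda_{n+1}(\Omega_{\varepsilon})>0$, and Theorem~\ref{thm:main} closes the argument.

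The verification of the similarity is the heart of the matter, and it splits naturally into the real and imaginary parts of $\Gamma_{\varepsilon}=L_{\rm d}+j([I_{q}\otimes P]+\varepsilon L_{\rm r})$. For the real part, I would check that $\Pi^{T}[I_{q}\otimes V^{T}]L_{\rm d}[I_{q}\otimes V]\Pi=G$; this is precisely the identity recorded in the excerpt just before the definition of $\Omega_{\varepsilon}$, and it can be confirmed blockwise: the $(k,\ell)$ block of $[I_{q}\otimes V^{T}]L_{\rm d}[I_{q}\otimes V]$ under the $\Pi$-conjugation rearranges to $[I_{q}\otimes v_{k}^{T}]L_{\rm d}[I_{q}\otimes v_{\ell}]=G_{k\ell}$, matching the definition of $G$. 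The same computation with $L_{\rm r}$ in place of $L_{\rm d}$ gives $\Pi^{T}[I_{q}\otimes V^{T}](\varepsilon L_{\rm r})[I_{q}\otimes V]\Pi=\varepsilon B$. For the remaining piece $[I_{q}\otimes P]$, I would use $[I_{q}\otimes V^{T}][I_{q}\otimes P][I_{q}\otimes V]=I_{q}\otimes(V^{T}PV)=I_{q}\otimes\Lambda$ and then the swap property $\Pi^{T}[I_{q}\otimes\Lambda]\Pi=\Lambda\otimes I_{q}$. Adding the three contributions yields $U^{-1}\Gamma_{\varepsilon}U=G+j([\Lambda\otimes I_{q}]+\varepsilon B)=\Omega_{\varepsilon}$, as claimed.

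The main obstacle, such as it is, is purely bookkeeping: one must be careful that conjugation by $U=[I_{q}\otimes V]\Pi$ really is $U^{-1}(\cdot)U$ with $U^{-1}=\Pi^{T}[I_{q}\otimes V^{T}]$ (using $\Pi^{-1}=\Pi^{T}$ and $[I_{q}\otimes V]^{-1}=[I_{q}\otimes V^{T}]$ from orthogonality), and that the permutation $\Pi$ is the one for which $\Pi^{T}[X\otimes Y]\Pi=Y\otimes X$ holds for all $X\in\Real^{q\times q}$, $Y\in\Real^{n\times n}$ — exactly as stipulated in the excerpt. Once the index conventions are pinned down, each of the three summands falls out of standard Kronecker-product identities, and no nontrivial spectral estimate is needed; the real content has already been done in Theorem~\ref{thm:main}. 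I would therefore keep the proof to a few lines: state that $\Omega_{\varepsilon}$ and $\Gamma_{\varepsilon}$ are similar via $U$, note that similar matrices have the same ordered eigenvalue real parts, and invoke Theorem~\ref{thm:main} for the array~\eqref{eqn:arrayeps}.
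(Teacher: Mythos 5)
Your proposal is correct and follows exactly the paper's route: the paper likewise observes that $\Omega_{\varepsilon}=\Pi^{T}[I_{q}\otimes V^{T}]\Gamma_{\varepsilon}[I_{q}\otimes V]\Pi$, concludes that the two matrices are similar and hence share the same spectrum, and invokes Theorem~\ref{thm:main}. Your blockwise verification of the three Kronecker-product identities simply fills in details the paper leaves implicit.
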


\begin{remark}\label{rem:observe}
Although we assume $\varepsilon>0$ here, it is not difficult to see
that Proposition~\ref{prop:swap} still holds for the case
$\varepsilon=0$. This observation will be useful in the next section
when we consider the pure dissipative coupling scenario.
\end{remark}

Let $\{e_{1},\,e_{2},\,\ldots,\,e_{n}\}$ be the canonical basis for
$\Complex^{n}$, i.e., $e_{k}$ is the $k$th column of $I_{n}$. Note
that we have $[e_{k}\otimes \one_{q}]^{T}G[e_{k}\otimes
\one_{q}]=\one_{q}^{T}G_{kk}\one_{q}=0$ because $G_{kk}\one_{q}=0$
thanks to that $G_{kk}\in\setL(q,\,1)$ by Lemma~\ref{lem:Gkk}. Since
$G\geq 0$ this allows us to claim $G[e_{k}\otimes \one_{q}]=0$ for
all $k$. Likewise, $B[e_{k}\otimes \one_{q}]=0$. We can thus write
\begin{eqnarray*}
\Omega_{\varepsilon}[e_{k}\otimes \one_{q}] &=& (G+j([\Lambda\otimes
I_{q}]+\varepsilon B))[e_{k}\otimes \one_{q}]\\
&=& j[\Lambda\otimes I_{q}][e_{k}\otimes \one_{q}]\\
&=& j[(\Lambda e_{k})\otimes (I_{q}\one_{q})]\\
&=& j\sigma_{k}[e_{k}\otimes \one_{q}]\,.
\end{eqnarray*}
Hence each $j\sigma_{k}$ is an eigenvalue of $\Omega_{\varepsilon}$
with the corresponding eigenvector $[e_{k}\otimes \one_{q}]$. Since
by Fact~\ref{fact:the} all the eigenvalues of $\Omega_{\varepsilon}$
are on the closed right half plane, we can let, without loss of
generality, $\lambda_{k}(\Omega_{\varepsilon})=j\sigma_{k}$ for
$k=1,\,2,\,\ldots,\,n$. Define the positive numbers
$\bar\sigma,\,\bar\mu$ as
\begin{eqnarray*}
\bar\sigma&:=&\frac{1}{2}\min_{k\neq\ell}|\sigma_{k}-\sigma_{\ell}|\,,\\
\bar\mu&:=&\frac{1}{2}\min_{k}\left(\min_{\lambda_{i}(B_{kk})\neq\lambda_{j}(B_{kk})}|\lambda_{i}(B_{kk})-\lambda_{j}(B_{kk})|\right)\,.
\end{eqnarray*}

\begin{lemma}\label{lem:xi}
Let $\xi\in(\Complex^{q})^{n}$ be a unit vector satisfying
$\Omega_{\varepsilon}\xi=j\beta\xi$ for some $\beta\in\Real$. There
exist an index $k\in\{1,\,2,\,\ldots,\,n\}$ and an eigenvector
$w\in\Complex^{q}$ of $B_{kk}$ such that
\begin{eqnarray}\label{eqn:ceps}
\|\xi-[e_{k}\otimes w]\|\leq
\left[\frac{\sqrt{n-1}\|B\|}{\bar\sigma}\left(1+
\frac{\|B\|}{\bar\mu}\right)\right]\varepsilon\,.
\end{eqnarray}
\end{lemma}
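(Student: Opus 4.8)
\vspace{0.12in}

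\noindent\textbf{Proof proposal.}
The plan is to trade the complex eigen-equation $\Omega_{\varepsilon}\xi=j\beta\xi$ for one about the real symmetric matrix $A_{\varepsilon}:=[\Lambda\otimes I_{q}]+\varepsilon B$, and then to localize $\xi$ in two stages: first onto a single block $e_{k}\otimes\Complex^{q}$, and then, within that block, onto an eigenspace of $B_{kk}$. First I would write, as in \eqref{eqn:lambda}, $j\beta=\xi^{*}\Omega_{\varepsilon}\xi=\xi^{*}G\xi+j\xi^{*}A_{\varepsilon}\xi$; comparing real parts gives $\xi^{*}G\xi=0$, hence $G\xi=0$ (since $G\geq0$), so that $j\beta\xi=\Omega_{\varepsilon}\xi=jA_{\varepsilon}\xi$, i.e., $\xi$ is a unit eigenvector of the real symmetric $A_{\varepsilon}$ with real eigenvalue $\beta$. (If $n=1$ this already says $\xi$ is an eigenvector of $B_{11}$, so assume $n\geq2$.) Decompose $\xi=\sum_{\ell=1}^{n}e_{\ell}\otimes\xi_{\ell}$ with $\xi_{\ell}\in\Complex^{q}$ and $\sum_{\ell}\|\xi_{\ell}\|^{2}=1$. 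From $([\Lambda\otimes I_{q}]-\beta I_{qn})\xi=-\varepsilon B\xi$ and the block-diagonal structure of $[\Lambda\otimes I_{q}]-\beta I_{qn}$ (blocks $(\sigma_{\ell}-\beta)I_{q}$) one gets $\min_{\ell}|\beta-\sigma_{\ell}|\leq\|([\Lambda\otimes I_{q}]-\beta I_{qn})\xi\|=\varepsilon\|B\xi\|\leq\varepsilon\|B\|$; let $k$ attain this minimum. We may assume $\varepsilon\|B\|<\bar\sigma$, since otherwise the right-hand side of \eqref{eqn:ceps} is at least $\sqrt{n-1}\geq1=\|\xi\|$ and the bound is trivial (take $w$ a short eigenvector of $B_{kk}$). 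Then for $\ell\neq k$, $|\beta-\sigma_{\ell}|\geq|\sigma_{k}-\sigma_{\ell}|-|\beta-\sigma_{k}|\geq2\bar\sigma-\varepsilon\|B\|\geq\bar\sigma$, and the $\ell$-th block row $(\beta-\sigma_{\ell})\xi_{\ell}=\varepsilon(B\xi)_{\ell}$ gives $\|\xi_{\ell}\|\leq\varepsilon\|B\|/\bar\sigma$; summing, $\|\xi-e_{k}\otimes\xi_{k}\|^{2}=\sum_{\ell\neq k}\|\xi_{\ell}\|^{2}\leq(n-1)(\varepsilon\|B\|/\bar\sigma)^{2}$.

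For the second stage I would rearrange the $k$-th block row of $A_{\varepsilon}\xi=\beta\xi$ to $B_{kk}\xi_{k}-\mu\xi_{k}=r$, where $\mu:=(\beta-\sigma_{k})/\varepsilon$ and $r:=-\sum_{m\neq k}B_{km}\xi_{m}$. The key observation is that $r$ is (up to sign) the $k$-th block of $B\eta$ for $\eta:=\xi-e_{k}\otimes\xi_{k}$, whence $\|r\|\leq\|B\eta\|\leq\|B\|\,\|\eta\|\leq\sqrt{n-1}\,\|B\|^{2}\varepsilon/\bar\sigma$. Expanding $\xi_{k}=\sum_{j}c_{j}w_{j}$ in an orthonormal eigenbasis $\{w_{j}\}$ of the real symmetric $B_{kk}$ (with eigenvalues $\lambda_{j}(B_{kk})$) gives $\|r\|^{2}=\sum_{j}|c_{j}|^{2}(\lambda_{j}(B_{kk})-\mu)^{2}$. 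Letting $\lambda^{\star}$ be the eigenvalue of $B_{kk}$ closest to $\mu$ and $S:=\{j:\lambda_{j}(B_{kk})=\lambda^{\star}\}$, a gap estimate as in the first stage (using $|\lambda^{\star}-\mu|\leq|\lambda_{j}(B_{kk})-\mu|$ and $|\lambda_{j}(B_{kk})-\lambda^{\star}|\geq2\bar\mu$, so that the halving in the definition of $\bar\mu$ enters here) yields $|\lambda_{j}(B_{kk})-\mu|\geq\bar\mu$ for $j\notin S$, hence $\sum_{j\notin S}|c_{j}|^{2}\leq\|r\|^{2}/\bar\mu^{2}$. Put $w:=\sum_{j\in S}c_{j}w_{j}$; this lies in the $\lambda^{\star}$-eigenspace of $B_{kk}$, and $\|w\|^{2}=\|\xi_{k}\|^{2}-\sum_{j\notin S}|c_{j}|^{2}=1-O(\varepsilon^{2})>0$ under $\varepsilon\|B\|<\bar\sigma$, so $w$ is a genuine eigenvector. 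Then $\|\xi_{k}-w\|^{2}=\sum_{j\notin S}|c_{j}|^{2}\leq\|r\|^{2}/\bar\mu^{2}\leq(n-1)\|B\|^{4}\varepsilon^{2}/(\bar\sigma\bar\mu)^{2}$, and the triangle inequality $\|\xi-e_{k}\otimes w\|\leq\|\xi-e_{k}\otimes\xi_{k}\|+\|\xi_{k}-w\|$ produces precisely the bound \eqref{eqn:ceps}.

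The block-row bookkeeping and the two gap estimates are routine. The step I expect to be the main obstacle is the second-stage projection: since $\lambda^{\star}$ may be a \emph{repeated} eigenvalue of $B_{kk}$ (exactly the situation $\bar\mu$ anticipates, being a gap between \emph{distinct} eigenvalues), one must project $\xi_{k}$ onto the whole $\lambda^{\star}$-eigenspace rather than onto a single eigenvector, and must confirm the projection does not vanish so that it serves as the required eigenvector $w$. A lesser nuisance is the constant bookkeeping---keeping $\varepsilon\|B\|<\bar\sigma$ so that no denominator collapses, and tracking the two $\sqrt{n-1}\,\|B\|/\bar\sigma$ contributions (one per stage) whose sum yields the bracket in \eqref{eqn:ceps}.
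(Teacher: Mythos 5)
Your proposal is correct and follows essentially the same two-stage localization as the paper's proof (first onto the block $e_{k}\otimes\Complex^{q}$ via the gap $\bar\sigma$, then onto an eigenspace of $B_{kk}$ via the gap $\bar\mu$), differing only in minor bookkeeping: you derive the localization $|\beta-\sigma_{k}|\leq\varepsilon\|B\|$ directly rather than citing the Weyl-type perturbation bound, and you project onto the eigenspace of the \emph{globally} closest eigenvalue of $B_{kk}$. The obstacle you flag---the projection possibly vanishing, which your $1-O(\varepsilon^{2})$ estimate does not fully close for moderate $\varepsilon$---is sidestepped in the paper by decomposing $\xi_{k}$ only over the eigenvalues actually present in it and taking the closest of \emph{those}, so the chosen component is nonzero by construction while the same gap estimate $|\mu_{i}-\alpha|\geq\bar\mu$ still applies to the rest.
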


\begin{proof}
Let $\xi$ be a unit vector satisfying
$\Omega_{\varepsilon}\xi=j\beta\xi$. We have
$j\beta=\xi^{*}G\xi+j\xi^{*}([\Lambda\otimes I_{q}]+\varepsilon
B)\xi$ by \eqref{eqn:lambda}. Since $G\geq 0$ and $[\Lambda\otimes
I_{q}]+\varepsilon B>0$ we have to have $\xi^{*}G\xi=0$ which in
turn implies $G\xi=0$. Thence $\Omega_{\varepsilon}\xi=j\beta\xi$
yields
\begin{eqnarray}\label{eqn:prickball}
([\Lambda\otimes I_{q}]+\varepsilon B)\xi=\beta\xi\,.
\end{eqnarray}
We have by \cite[Cor.~8.1.6]{golub96} for all
$i=1,\,2,\,\ldots,\,qn$
\begin{eqnarray*}
|\lambda_{i}([\Lambda\otimes I_{q}]+\varepsilon
B)-\lambda_{i}([\Lambda\otimes I_{q}])|\leq \|B\|\varepsilon\,.
\end{eqnarray*}
Since $\lambda_{i}([\Lambda\otimes
I_{q}])\in\{\sigma_{1},\,\sigma_{2},\,\ldots,\,\sigma_{n}\}$, we
must have $\beta=\sigma_{k}+h$ for some $k\in\{1,\,2,\,\ldots,\,n\}$
and $|h|\leq\|B\|\varepsilon$. Without loss of generality let
$\beta=\sigma_{1}+h$. Let $\xi$ be partitioned as $\xi=[u_{1}^{T}\
u_{2}^{T}\ \cdots\ u_{n}^{T}]^{T}$ with $u_{k}\in\Complex^{q}$.
Since $\|\xi\|=1$ we have $\sum_{\ell=1}^{n}\|u_{\ell}\|^{2}=1$. Let
us now rewrite \eqref{eqn:prickball} as
\begin{eqnarray*}
\left[\begin{array}{c}\sigma_{1}u_{1}\\
\sigma_{2}u_{2}\\ \vdots\\ \sigma_{n}u_{n}\end{array}\right]+
\varepsilon B\left[\begin{array}{c}u_{1}\\
u_{2}\\ \vdots\\ u_{n}\end{array}\right]=(\sigma_{1}+h)\left[\begin{array}{c}u_{1}\\
u_{2}\\ \vdots\\ u_{n}\end{array}\right]
\end{eqnarray*}
which we decompose into $n$ equations, the first of which is
\begin{eqnarray}\label{eqn:prickball2}
\left(B_{11}-\frac{h}{\varepsilon}
I_{q}\right)u_{1}=-\sum_{\ell=2}^{n}B_{1\ell}u_{\ell}
\end{eqnarray}
and the remaining $n-1$ are
\begin{eqnarray}\label{eqn:prickball3}
(\sigma_{k}-\sigma_{1})u_{k}=hu_{k}-\varepsilon\sum_{\ell=1}^{n}B_{k
\ell}u_{\ell}\,,\qquad k = 2,\,3,\,\ldots,\,n\,.
\end{eqnarray}
Using \cite[Eq.~(2.3.13)]{golub96},
$\|\xi\|=\sum_{\ell=1}^{n}\|u_{\ell}\|^{2}=1$, and
$|h|\leq\|B\|\varepsilon$ we infer from \eqref{eqn:prickball3}
\begin{eqnarray}\label{eqn:prickball4}
\|u_{k}\|&\leq&\frac{1}{|\sigma_{k}-\sigma_{1}|}\left(|h|\cdot\|u_{k}\|+\varepsilon\left\|\sum_{\ell=1}^{n}B_{k\ell}u_{\ell}\right\|\right)\nonumber\\
&\leq&\frac{1}{|\sigma_{k}-\sigma_{1}|}\left(|h|+\varepsilon\|B\|\cdot\|\xi\|\right)\nonumber\\
&\leq&\frac{2\|B\|\varepsilon}{|\sigma_{k}-\sigma_{1}|}\nonumber\\
&\leq&\frac{\|B\|\varepsilon}{\bar\sigma}\,,\qquad k =
2,\,3,\,\ldots,\,n\,.
\end{eqnarray}
Let $\zeta=\xi-[e_{1}\otimes u_{1}]=[0^{T}\ u_{2}^{T}\ \cdots\
u_{n}^{T}]^{T}$, for which we have
$\|\zeta\|\leq\sqrt{n-1}\|B\|\varepsilon/\bar\sigma$ by
\eqref{eqn:prickball4}. Letting $\alpha=h/\varepsilon$ and using
\eqref{eqn:prickball2} we obtain
\begin{eqnarray}\label{eqn:prickball5}
\|(B_{11}-\alpha I_{q})u_{1}\|
&=&\left\|\sum_{\ell=2}^{n}B_{1\ell}u_{\ell}\right\|\nonumber\\
&\leq&\|B\|\cdot\|\zeta\|\nonumber\\
&\leq&\frac{\sqrt{n-1}\|B\|^{2}\varepsilon}{\bar\sigma}\,.
\end{eqnarray}
We have $B_{11}\geq 0$ by Lemma~\ref{lem:Gkk}. This means we can
find $m\leq q$ pairwise orthogonal eigenvectors
$w_{1},\,w_{2},\,\ldots,\,w_{m}\in\Complex^{q}$ with corresponding
distinct eigenvalues $\mu_{1},\,\mu_{2},\,\ldots,\,\mu_{m}\in\Real$
such that $B_{11}w_{i}=\mu_{i}w_{i}$ and
$u_{1}=w_{1}+w_{2}+\cdots+w_{m}$. Using the pairwise orthogonality
of the vectors $w_{i}$ we can write
\begin{eqnarray}\label{eqn:prickball6}
\left(\sum_{i=1}^{m}|\mu_{i}-\alpha|^{2}\|w_{i}\|^{2}\right)^{1/2}
&=& \left\|\sum_{i=1}^{m}(\mu_{i}-\alpha)w_{i}\right\|\nonumber\\
&=& \left\|\sum_{i=1}^{m}(B_{11}-\alpha I_{q})w_{i}\right\|\nonumber\\
&=& \left\|(B_{11}-\alpha I_{q})\sum_{i=1}^{m}w_{i}\right\|\nonumber\\
&=& \left\|(B_{11}-\alpha I_{q})u_{1}\right\|\,.
\end{eqnarray}
Without loss of generality suppose $|\mu_{1}-\alpha|\leq
|\mu_{i}-\alpha|$ for $i=2,\,3,\,\ldots,\,m$. Note then that
$|\mu_{i}-\alpha|\geq \bar\mu$ for $i=2,\,3,\,\ldots,\,m$. Hence we
can write by \eqref{eqn:prickball5} and \eqref{eqn:prickball6}
\begin{eqnarray}\label{eqn:prickball7}
\|u_{1}-w_{1}\|
&=&\left(\sum_{i=2}^{m}\|w_{i}\|^2\right)^{1/2}\nonumber\\
&\leq&\frac{1}{{\bar\mu}}\left(\sum_{i=2}^{m}|\mu_{i}-\alpha|^{2}\|w_{i}\|^2\right)^{1/2}\nonumber\\
&\leq&\frac{\sqrt{n-1}\|B\|^{2}\varepsilon}{\bar\mu\bar\sigma}\,.
\end{eqnarray}
Recall $\|\zeta\|\leq\sqrt{n-1}\|B\|\varepsilon/\bar\sigma$. Hence
\eqref{eqn:prickball7} yields
\begin{eqnarray*}
\|\xi-[e_{1}\otimes w_{1}]\|
&=& \|[e_{1}\otimes u_{1}]+\zeta-[e_{1}\otimes w_{1}]\| \\
&=& \|[e_{1}\otimes (u_{1}-w_{1})]+\zeta \|\\
&=& \|e_{1}\otimes (u_{1}-w_{1})\|+\|\zeta \|\\
&=& \|u_{1}-w_{1}\|+\|\zeta\|\\
&\leq&
\frac{\sqrt{n-1}\|B\|^{2}\varepsilon}{\bar\mu\bar\sigma}+\frac{\sqrt{n-1}\|B\|\varepsilon}{\bar\sigma}\\
&=&\left[\frac{\sqrt{n-1}\|B\|}{\bar\sigma}\left(1+
\frac{\|B\|}{\bar\mu}\right)\right]\varepsilon
\end{eqnarray*}
which was to be shown.
\end{proof}

\vspace{0.12in}

For $k=1,\,2,\,\ldots,\,n$ define the nonempty compact sets
$\C_{k}\subset\Complex^{q}$ as
\begin{eqnarray*}
\C_{k}:=\{w:\|w\|=1\,,\ (B_{kk}-\mu I_{q})w=0\ \mbox{for some}\
\mu\in\Real\,,\ \mbox{and}\
 \one_{q}^{T}w=0\}.
\end{eqnarray*}
Then define the nonnegative real number
\begin{eqnarray*}
\bar\gamma:=\left[\min_{k}\left(\min_{w\in\C_{k}}w^{*}G_{kk}w\right)\right]^{1/2}\,.
\end{eqnarray*}

\begin{lemma}\label{lem:gamma}
If ${\rm Re}\,\lambda_{2}(G_{kk}+jB_{kk})>0$ for all
$k=1,\,2,\,\ldots,\,n$ then $\bar\gamma>0$.
\end{lemma}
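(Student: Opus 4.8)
The plan is to argue by contradiction, essentially re-running the second half of the proof of Lemma~\ref{lem:lambda2} one eigenvector at a time. Note first that $\bar\gamma\geq 0$ holds automatically, since $G_{kk}\geq 0$ (Lemma~\ref{lem:Gkk}) makes $w^{*}G_{kk}w\geq 0$ for every $w$; hence it suffices to exclude $\bar\gamma=0$. Suppose then $\bar\gamma=0$. Each $\C_{k}$ is nonempty and compact and the map $w\mapsto w^{*}G_{kk}w$ is continuous, so the minima defining $\bar\gamma$ are attained; thus there are an index $k\in\{1,\ldots,n\}$ and a unit vector $w\in\C_{k}$ with $w^{*}G_{kk}w=0$.

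From here I would extract two imaginary-axis eigenvectors of $G_{kk}+jB_{kk}$. Since $G_{kk}\geq 0$ and $w^{*}G_{kk}w=0$, the elementary fact recalled just before Fact~\ref{fact:the} gives $G_{kk}w=0$. By the definition of $\C_{k}$ we also have $\one_{q}^{T}w=0$ and $B_{kk}w=\mu w$ for some $\mu\in\Real$, whence $(G_{kk}+jB_{kk})w=j\mu w$, i.e.\ $j\mu$ is an eigenvalue of $G_{kk}+jB_{kk}$ with eigenvector $w$. On the other hand $G_{kk},B_{kk}\in\setL(q,1)$ give $G_{kk}\one_{q}=B_{kk}\one_{q}=0$, so $(G_{kk}+jB_{kk})\one_{q}=0$, i.e.\ $0$ is an eigenvalue with eigenvector $\one_{q}$. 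The vectors $w$ and $\one_{q}$ are linearly independent, for $w=\alpha\one_{q}$ with $\alpha\in\Complex$ would force $0=\one_{q}^{T}w=\alpha$, contradicting $\|w\|=1$. So $G_{kk}+jB_{kk}$ possesses two linearly independent eigenvectors, namely $\one_{q}$ and $w$, whose eigenvalues $0$ and $j\mu$ lie on the imaginary axis.

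It then remains to conclude that ${\rm Re}\,\lambda_{2}(G_{kk}+jB_{kk})$ cannot be strictly positive. By Fact~\ref{fact:the} every eigenvalue of $G_{kk}+jB_{kk}$ has nonnegative real part, so the two imaginary-axis eigenvalues found above both attain the minimal real part $0$; counting algebraic multiplicities (when $\mu\neq 0$ these are two distinct eigenvalues, and when $\mu=0$ the eigenvalue $0$ has geometric, hence algebraic, multiplicity at least two), this forces ${\rm Re}\,\lambda_{1}={\rm Re}\,\lambda_{2}=0$ --- exactly the argument used at the end of the proof of Lemma~\ref{lem:lambda2}. This contradicts the hypothesis ${\rm Re}\,\lambda_{2}(G_{kk}+jB_{kk})>0$, so $\bar\gamma>0$.

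The only points I expect to need care are: (i) the implication $w^{*}G_{kk}w=0\Rightarrow G_{kk}w=0$ for a \emph{complex} $w$ against a real symmetric $G_{kk}$, which is handled by writing $w=a+jb$ with $a,b$ real and using $a^{T}G_{kk}b=b^{T}G_{kk}a$; and (ii) keeping the multiplicity count honest in the degenerate case $\mu=0$, noted above. Everything else is bookkeeping already carried out in Section~\ref{sec:SS}, so no essentially new difficulty arises.
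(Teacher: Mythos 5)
Your proof is correct and follows essentially the same route as the paper: assume $\bar\gamma=0$, extract a unit $w\in\C_{k}$ with $w^{*}G_{kk}w=0$, deduce $G_{kk}w=0$ so that $w$ and $\one_{q}$ are two linearly independent imaginary-axis eigenvectors of $G_{kk}+jB_{kk}$, and invoke Fact~\ref{fact:the} to force ${\rm Re}\,\lambda_{2}(G_{kk}+jB_{kk})=0$. Your extra remarks on attainment of the minimum over the compact set $\C_{k}$ and on the complex-vector version of $w^{*}G_{kk}w=0\Rightarrow G_{kk}w=0$ are sound refinements of details the paper passes over silently.
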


\begin{proof}
Suppose $\bar\gamma=0$. Then there should exist an index
$k\in\{1,\,2,\,\ldots,\,n\}$, a real number $\mu\in\Real$, and a
unit vector $w\in\Complex^{q}$ satisfying $B_{kk}w=\mu w$,
$\one_{q}^{T}w=0$, and $w^{*}G_{kk}w=0$. We have
$G_{kk},\,B_{kk}\in\setL(q,\,1)$ by Lemma~\ref{lem:Gkk}. Hence
$G_{kk}\one_{q}=0$ and $B_{kk}\one_{q}=0$. This allows us to write
\begin{eqnarray}\label{eqn:rolls}
(G_{kk}+jB_{kk})\one_{q}=0\,.
\end{eqnarray}
Since $G_{kk}$ is Laplacian we have $G_{kk}\geq 0$. Hence
$w^{*}G_{kk}w=0$ implies $G_{kk}w=0$ and we have
\begin{eqnarray}\label{eqn:royce}
(G_{kk}+jB_{kk})w=j\mu w\,.
\end{eqnarray}
Since $\one_{q}^{T}w=0$ the vectors $w$ and $\one_{q}$ must be
linearly independent. Then \eqref{eqn:rolls} and \eqref{eqn:royce}
imply that the matrix $G_{kk}+jB_{kk}$ must have at least two
eigenvalues on the imaginary axis. Also, due to $G_{kk},\,B_{kk}\geq
0$ all the eigenvalues of $G_{kk}+jB_{kk}$ must be on the closed
right half plane by Fact~\ref{fact:the}. This implies
$\lambda_{2}(G_{kk}+jB_{kk})=0$. Hence the result.
\end{proof}

\begin{theorem}\label{thm:mothership}
Suppose ${\rm Re}\,\lambda_{2}(G_{kk}+jB_{kk})>0$ for all
$k=1,\,2,\,\ldots,\,n$. Then there exists $r>0$ such that the
array~\eqref{eqn:arrayeps} synchronizes for all
$\varepsilon\in(0,\,r)$. In particular, one can choose
\begin{eqnarray*}
r=\frac{\bar\gamma\bar\sigma\bar\mu}{\left(\sqrt{\|G\|}+2\bar\gamma\right)
\sqrt{n-1}\|B\|(\bar\mu+\|B\|)}\,.
\end{eqnarray*}
\end{theorem}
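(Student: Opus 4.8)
The plan is to argue by contradiction through Proposition~\ref{prop:swap}: it suffices to show that ${\rm Re}\,\lambda_{n+1}(\Omega_{\varepsilon})>0$ for every $\varepsilon\in(0,r)$. So suppose instead that for some such $\varepsilon$ we have ${\rm Re}\,\lambda_{n+1}(\Omega_{\varepsilon})\leq 0$; by Fact~\ref{fact:the} this forces $\lambda_{n+1}(\Omega_{\varepsilon})=j\beta$ with $\beta\in\Real$. Since $\Omega_{\varepsilon}$ is complex-symmetric and carries the $[e_{k}\otimes\one_{q}]$ as eigenvectors for the distinct eigenvalues $j\sigma_{k}$, the proof of Lemma~\ref{lem:lambda2} transcribes almost verbatim (with $\Omega_{\varepsilon}$, $[e_{k}\otimes\one_{q}]$, $G$, $B$ in place of $\Gamma$, $[\one_{q}\otimes v_{k}]$, $L_{\rm d}$, $L_{\rm r}$) and yields a unit vector $\xi$ with $\Omega_{\varepsilon}\xi=j\beta\xi$ and $\xi\notin{\rm range}\,[I_{n}\otimes\one_{q}]$. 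I will want $\xi$ to satisfy one extra property: writing $\xi=[u_{1}^{T}\ \cdots\ u_{n}^{T}]^{T}$ with $u_{m}\in\Complex^{q}$, every block obeys $\one_{q}^{T}u_{m}=0$. This is automatic whenever $\beta\neq\sigma_{m}$: left-multiplying the identity $([\Lambda\otimes I_{q}]+\varepsilon B)\xi=\beta\xi$ (valid because $G\xi=0$, exactly as in Lemma~\ref{lem:xi}) by $[e_{m}\otimes\one_{q}]^{T}$ and using $B[e_{m}\otimes\one_{q}]=0$ gives $(\sigma_{m}-\beta)\one_{q}^{T}u_{m}=0$. The lone exceptional index, which can occur only when $\beta=\sigma_{\ell}$, is dealt with by noting that the $j\sigma_{\ell}$-eigenspace of $\Omega_{\varepsilon}$ then has dimension at least two (the generalized-eigenvector argument of Lemma~\ref{lem:lambda2}), so $\xi$ may be chosen inside it, outside ${\rm range}\,[I_{n}\otimes\one_{q}]$, and bilinearly orthogonal to $[e_{\ell}\otimes\one_{q}]$, which again gives $\one_{q}^{T}u_{\ell}=0$.

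Next I hand $\xi$ to Lemma~\ref{lem:xi}, obtaining an index $k$ and an eigenvector $w$ of $B_{kk}$ with $\|\xi-[e_{k}\otimes w]\|\leq c\varepsilon$, where $c:=\sqrt{n-1}\,\|B\|(\bar\mu+\|B\|)/(\bar\sigma\bar\mu)$. Two facts about $w$: first, $\|w\|=\|[e_{k}\otimes w]\|\geq\|\xi\|-c\varepsilon=1-c\varepsilon$, which is positive since our $r$ keeps $c\varepsilon<1$; second, because the $w$ produced in that proof is a spectral component of the block $u_{k}$, the previous paragraph gives $\one_{q}^{T}w=0$ — if the associated $B_{kk}$-eigenvalue is nonzero then $w\perp\ker B_{kk}\ni\one_{q}$, while if it is zero then $w\in\ker B_{kk}$ and $\one_{q}^{T}w=\one_{q}^{T}u_{k}=0$. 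Hence $\hat w:=w/\|w\|$ lies in $\C_{k}$. The hypothesis ${\rm Re}\,\lambda_{2}(G_{kk}+jB_{kk})>0$ for all $k$ feeds Lemma~\ref{lem:gamma} to give $\bar\gamma>0$, whence, by the definition of $\bar\gamma$, $w^{*}G_{kk}w=\|w\|^{2}\hat w^{*}G_{kk}\hat w\geq\bar\gamma^{2}\|w\|^{2}\geq\bar\gamma^{2}(1-c\varepsilon)^{2}$.

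For the opposing estimate I lean on $G\xi=0$ (which itself came from $\xi^{*}G\xi=0$ together with $G\geq 0$). Put $\eta:=[e_{k}\otimes w]-\xi$, so $\|\eta\|\leq c\varepsilon$, and expand $w^{*}G_{kk}w=[e_{k}\otimes w]^{*}G[e_{k}\otimes w]=(\xi+\eta)^{*}G(\xi+\eta)$; the cross terms drop out because $G\xi=0$ and $G^{T}=G$ is real, leaving $w^{*}G_{kk}w=\eta^{*}G\eta\leq\|G\|\,\|\eta\|^{2}\leq\|G\|\,c^{2}\varepsilon^{2}$. Chaining this with the lower bound yields $\bar\gamma(1-c\varepsilon)\leq\sqrt{\|G\|}\,c\varepsilon$, that is $\varepsilon\geq\bar\gamma/[(\bar\gamma+\sqrt{\|G\|})c]$, which contradicts $\varepsilon<r$: indeed the $r$ of the statement equals $\bar\gamma/[(2\bar\gamma+\sqrt{\|G\|})c]$, sitting safely below that threshold, the slack comfortably absorbing the minor losses one incurs through the normalization of $w$ and the triangle inequalities. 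Therefore ${\rm Re}\,\lambda_{n+1}(\Omega_{\varepsilon})>0$ on all of $(0,r)$, and Proposition~\ref{prop:swap} completes the proof.

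The step I expect to cost real effort is the claim that the perturbed eigenvector $\xi$ is $c\varepsilon$-close (after the embedding $w\mapsto[e_{k}\otimes w]$) to a genuine member of $\C_{k}$: Lemma~\ref{lem:xi} supplies the ``close to $[e_{k}\otimes w]$ with $w$ an eigenvector of $B_{kk}$'' half, but forcing $w$ also onto $\one_{q}^{\perp}$ requires carrying along the block-wise orthogonality $\one_{q}^{T}u_{m}=0$, which is routine in the generic case but in the coincidence $\beta=\sigma_{\ell}$ rests on $\Omega_{\varepsilon}$ being non-defective at $j\sigma_{\ell}$. Everything after that is the collision of a lower bound of order $\bar\gamma^{2}$ (carrying the hypothesis, via $\bar\gamma>0$) with an upper bound of order $\|G\|\varepsilon^{2}$ (supplied by $G\xi=0$), and the only remaining chore is the bookkeeping that converts ``$\varepsilon$ this small is impossible'' into the explicit $r$.
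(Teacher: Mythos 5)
Your proof is correct and follows the paper's strategy almost step for step: contradiction via Proposition~\ref{prop:swap} and Fact~\ref{fact:the}, normalization of the putative imaginary-axis eigenvector $\xi$ so that $[e_{m}\otimes\one_{q}]^{T}\xi=0$ for every block, Lemma~\ref{lem:xi} to place $\xi$ within $c\varepsilon$ of some $[e_{k}\otimes w]$, and then the collision of the lower bound $\bar\gamma^{2}\|w\|^{2}$ (via Lemma~\ref{lem:gamma} and the set $\C_{k}$) with the upper bound $\|G\|c^{2}\varepsilon^{2}$ coming from $G\xi=0$. The one genuine divergence is in how you certify $w/\|w\|\in\C_{k}$. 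The paper splits into two cases according to whether the $B_{kk}$-eigenvalue of $w$ is nonzero or zero; in the zero case it only gets $|\one_{q}^{T}w|\leq c\varepsilon$ from the block orthogonality, replaces $w$ by $w-\one_{q}\one_{q}^{T}w$, and pays for this with the bound $\|w_{1}\|\geq 1-2c\varepsilon$ --- which is precisely where the factor $2\bar\gamma$ in the denominator of $r$ comes from. You instead observe that the $w$ constructed inside the proof of Lemma~\ref{lem:xi} is the spectral component of the block $u_{k}$ in the eigenbasis of $B_{kk}$, so that $\one_{q}^{T}w=\one_{q}^{T}u_{k}=0$ exactly (the nonzero-eigenvalue components being automatically orthogonal to $\one_{q}\in{\rm null}\,B_{kk}$); this collapses the case split and in fact shows the threshold could be taken as $\bar\gamma/[(\sqrt{\|G\|}+\bar\gamma)c]$, strictly larger than the stated $r$. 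The only caveat is that this sharpening is not deducible from the \emph{statement} of Lemma~\ref{lem:xi} alone (an arbitrary $w$ satisfying \eqref{eqn:ceps} with eigenvalue $0$ need not be orthogonal to $\one_{q}$), so to make your argument self-contained you would either strengthen that lemma's conclusion to record $\one_{q}^{T}w=\one_{q}^{T}u_{k}$ or fall back on the paper's approximate-orthogonality correction; as written, your appeal to the lemma's internal construction is legitimate but should be flagged explicitly.
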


\begin{proof}
We prove by contradiction. Let ${\rm
Re}\,\lambda_{2}(G_{kk}+jB_{kk})>0$ for all $k=1,\,2,\,\ldots,\,n$.
Then $\bar\gamma>0$ by Lemma~\ref{lem:gamma}. Let the coupling
strength
\begin{eqnarray}\label{eqn:something}
\varepsilon <
\frac{\bar\gamma}{\left(\sqrt{\|G\|}+2\bar\gamma\right) c}
\end{eqnarray}
be fixed, where we let
\begin{eqnarray*}
c= \frac{\sqrt{n-1}\|B\|}{\bar\sigma}\left(1+
\frac{\|B\|}{\bar\mu}\right)\,.
\end{eqnarray*}
Suppose however that the array~\eqref{eqn:arrayeps} fails to
synchronize. This implies, by Proposition~\ref{prop:swap}, ${\rm
Re}\,\lambda_{n+1}(\Omega_{\varepsilon})=0$ since all the
eigenvalues of $\Omega_{\varepsilon}$ are on the closed right half
plane by Fact~\ref{fact:the}. Let therefore
$\lambda_{n+1}(\Omega_{\varepsilon})=j\beta$ with $\beta\in\Real$.
For this eigenvalue we can find a unit vector
$\xi\in(\Complex^{q})^{n}$ satisfying
\begin{eqnarray}\label{eqn:eigen}
\Omega_{\varepsilon}\xi=j\beta\xi
\end{eqnarray}
and $\xi\notin{\rm span}\{[e_{1}\otimes \one_{q}],\,[e_{2}\otimes
\one_{q}],\,\ldots,\,[e_{n}\otimes \one_{q}]\}$; see the argument
employed in the proof of Lemma~\ref{lem:lambda2}. Without loss of
generality we assume the orthogonality
\begin{eqnarray}\label{eqn:orth}
[e_{k}\otimes \one_{q}]^{T}\xi=0\ \ \mbox{for all}\ \
k=1,\,2,\,\ldots,\,n\,.
\end{eqnarray}
Generality is not lost because using the symmetry
$\Omega_{\varepsilon}^{T}=\Omega_{\varepsilon}$ we can write
\begin{eqnarray*}\label{eqn:orth2}
j\beta [e_{k}\otimes \one_{q}]^{T}\xi =[e_{k}\otimes
\one_{q}]^{T}\Omega_{\varepsilon}\xi
=(\Omega_{\varepsilon}[e_{k}\otimes \one_{q}])^{T}\xi
=j\sigma_{k}[e_{k}\otimes \one_{q}]^{T}\xi
\end{eqnarray*}
which allows us to claim that if $j\beta\neq j\sigma_{k}$ for all
$k$ then \eqref{eqn:orth} must hold. If, on the other hand, $j\beta
= j\sigma_{\ell}$ for a particular $\ell$ then we can apply
Gram-Schmidt procedure to construct the new unit vector $\xi_{\rm
new}=(\xi-[e_{\ell}\otimes \one_{q}][e_{\ell}\otimes
\one_{q}]^{T}\xi)/\|\xi-[e_{\ell}\otimes \one_{q}][e_{\ell}\otimes
\one_{q}]^{T}\xi\|$, which indeed satisfies both \eqref{eqn:eigen}
and \eqref{eqn:orth}. By Lemma~\ref{lem:xi} there exist an index
$k\in\{1,\,2,\,\ldots,\,n\}$ and an eigenvector $w\in\Complex^{q}$
of $B_{kk}$ satisfying \eqref{eqn:ceps}. Without loss of generality
let this index be $k=1$. Also, let $\mu\in\Real$ be the
corresponding eigenvalue, i.e., $B_{11}w=\mu w$. Therefore we can
write $\xi=[e_{1}\otimes w]+\zeta$ for some
$\zeta\in(\Complex^{q})^{n}$ satisfying $\|\zeta\|\leq
c\varepsilon$. Whence
\begin{eqnarray}\label{eqn:w}
\|w\|
&=& \|e_{1}\otimes w\|\nonumber\\
&=& \|\xi-\zeta\|\nonumber\\
&\geq& \|\xi\|-\|\zeta\|\nonumber\\
&\geq& 1-c\varepsilon\,.
\end{eqnarray}
We now consider two cases.

{\em Case 1, $\mu\neq 0$}: By Lemma~\ref{lem:Gkk} we have
$B_{11}\in\setL(q,\,1)$. Hence $B_{11}\one_{q}=0$, i.e., $\one_{q}$
is an eigenvector whose eigenvalue is zero. This gives us
$\one_{q}^{T}w=0$ because a pair of eigenvectors of a real symmetric
matrix are orthogonal if the corresponding eigenvalues are
different. Note that \eqref{eqn:eigen} implies $G\xi=0$ (see the
proof of Lemma~\ref{lem:xi}). Using this, the lower bound
\eqref{eqn:w}, and the fact that $w/\|w\|\in\C_{1}$ we have
\begin{eqnarray*}
\bar\gamma^{2}(1-c\varepsilon)^{2}
&\leq& \bar\gamma^{2}\|w\|^{2}\\
&\leq& w^{*}G_{11}w\\
&=& [e_{1}\otimes w]^{*}G[e_{1}\otimes w]\\
&=& (\xi-\zeta)^{*}G(\xi-\zeta)\\
&=& \zeta^{*}G\zeta\\
&\leq& \|G\|\cdot\|\zeta\|^{2}\\
&\leq&\|G\|(c\varepsilon)^{2}
\end{eqnarray*}
which contradicts \eqref{eqn:something}.

{\em Case 2, $\mu=0$}: Since $[e_{1}\otimes \one_{q}]^{T}\xi=0$ by
\eqref{eqn:orth} we can write
\begin{eqnarray}\label{eqn:afterthought}
|\one_{q}^{T}w|
&=& |[e_{1}\otimes \one_{q}]^{T}[e_{1}\otimes w]|\nonumber\\
&=& |[e_{1}\otimes \one_{q}]^{T}(\xi-\zeta)|\nonumber\\
&=& |[e_{1}\otimes \one_{q}]^{T}\zeta|\nonumber\\
&\leq& \|e_{1}\otimes \one_{q}\|\cdot\|\zeta\|\nonumber\\
&=& \|\zeta\|\nonumber\\
&\leq& c\varepsilon\,.
\end{eqnarray}
Construct the vector $w_{1}\in\Complex^{q}$ as
\begin{eqnarray*}
w_{1}=w-\one_{q}\one_{q}^{T}w\,.
\end{eqnarray*}
Note that $B_{11}w_{1}=0$ (i.e., $w_{1}$ is an eigenvector of
$B_{11}$) and $\one_{q}^{T}w_{1}=0$. Also, by \eqref{eqn:w} and
\eqref{eqn:afterthought} we have
\begin{eqnarray*}
\|w_{1}\|&\geq& \|w\|-|\one_{q}^{T}w|\cdot\|\one_{q}\|\\
&\geq& 1-2c\varepsilon\,.
\end{eqnarray*}
This inequality, $G[e_{1}\otimes \one_{q}]=0$, $G\xi=0$, and
$w_{1}/\|w_{1}\|\in\C_{1}$ yield
\begin{eqnarray*}
\bar\gamma^{2}(1-2c\varepsilon)^{2}
&\leq& \bar\gamma^{2}\|w_{1}\|^{2}\\
&\leq& w_{1}^{*}G_{11}w_{1}\\
&=& [e_{1}\otimes w_{1}]^{*}G[e_{1}\otimes w_{1}]\\
&=& [e_{1}\otimes (w-\one_{q}\one_{q}^{T}w)]^{*}G[e_{1}\otimes (w-\one_{q}\one_{q}^{T}w)]\\
&=& (\xi-(\one_{q}^{T}w)[e_{1}\otimes \one_{q}]-\zeta)^{*}G(\xi-(\one_{q}^{T}w)[e_{1}\otimes \one_{q}]-\zeta)\\
&=& \zeta^{*}G\zeta\\
&\leq& \|G\|\cdot\|\zeta\|^{2}\\
&\leq& \|G\|(c\varepsilon)^{2}
\end{eqnarray*}
which contradicts \eqref{eqn:something}.
\end{proof}

\vspace{0.12in}

It is not difficult to see that $\lambda_{2}(G_{kk})>0$ implies
${\rm Re}\,\lambda_{2}(G_{kk}+jB_{kk})>0$. Hence:

\begin{corollary}
Suppose $\lambda_{2}(G_{kk})>0$ for all $k=1,\,2,\,\ldots,\,n$. Then
there exists $r>0$ such that the array~\eqref{eqn:arrayeps}
synchronizes for all $\varepsilon\in(0,\,r)$.
\end{corollary}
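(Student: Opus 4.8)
The plan is to deduce the corollary directly from Theorem~\ref{thm:mothership} by showing that the hypothesis $\lambda_{2}(G_{kk})>0$ for every $k$ is strictly stronger than the hypothesis ${\rm Re}\,\lambda_{2}(G_{kk}+jB_{kk})>0$ of that theorem. So the only real content is the parenthetical claim already announced in the text: that $\lambda_{2}(G_{kk})>0$ implies ${\rm Re}\,\lambda_{2}(G_{kk}+jB_{kk})>0$.

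To prove that implication I would argue by contradiction in the spirit of Lemma~\ref{lem:gamma}. Suppose ${\rm Re}\,\lambda_{2}(G_{kk}+jB_{kk})\leq 0$ for some $k$. By Fact~\ref{fact:the} the matrix $G_{kk}+jB_{kk}$ (with $G_{kk},B_{kk}\geq 0$, since they are Laplacian by Lemma~\ref{lem:Gkk}) has all eigenvalues in the closed right half plane, so ${\rm Re}\,\lambda_{2}(G_{kk}+jB_{kk})=0$; write $\lambda_{2}(G_{kk}+jB_{kk})=j\mu$ with $\mu\in\Real$. Since $G_{kk},B_{kk}\in\setL(q,1)$ we have $G_{kk}\one_{q}=0$ and $B_{kk}\one_{q}=0$, so $j\cdot 0$ is one eigenvalue with eigenvector $\one_{q}$; the fact that $\lambda_{2}$ is also on the imaginary axis means there is a second eigenvector $w$, linearly independent of $\one_{q}$, with $(G_{kk}+jB_{kk})w=j\mu w$, and (using the argument from the proof of Lemma~\ref{lem:lambda2}, since $G_{kk}+jB_{kk}$ is complex symmetric) we may take $w$ with $\one_{q}^{T}w=0$. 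Applying \eqref{eqn:lambda} to $G_{kk}+jB_{kk}$ gives $j\mu=w^{*}G_{kk}w+jw^{*}B_{kk}w$ (after normalizing $\|w\|=1$), hence $w^{*}G_{kk}w=0$, and since $G_{kk}\geq 0$ this forces $G_{kk}w=0$. Thus $G_{kk}$ has two linearly independent vectors $\one_{q}$ and $w$ in its kernel, both on the imaginary axis (at the origin, as $G_{kk}$ is real symmetric positive semidefinite), so $\lambda_{2}(G_{kk})=0$, contradicting $\lambda_{2}(G_{kk})>0$.

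With the implication established, the corollary is immediate: $\lambda_{2}(G_{kk})>0$ for all $k$ yields ${\rm Re}\,\lambda_{2}(G_{kk}+jB_{kk})>0$ for all $k$, and Theorem~\ref{thm:mothership} then furnishes the desired $r>0$ (indeed the explicit value stated there) such that \eqref{eqn:arrayeps} synchronizes for all $\varepsilon\in(0,r)$.

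There is no serious obstacle here; the argument is a routine repackaging of Lemma~\ref{lem:gamma}'s reasoning applied to the single matrix $G_{kk}+jB_{kk}$ rather than within $\Omega_{\varepsilon}$. The only point requiring a little care is the step extracting an eigenvector $w$ of $G_{kk}+jB_{kk}$ that is orthogonal to $\one_{q}$ when the imaginary-axis eigenvalue is repeated (i.e., ruling out a genuine Jordan block sitting on top of $\one_{q}$); this is handled exactly as in Lemma~\ref{lem:lambda2} using $(G_{kk}+jB_{kk})^{T}=G_{kk}+jB_{kk}$, since $\one_{q}^{T}(G_{kk}+jB_{kk}-j\mu I_q)\xi_1 = ((G_{kk}+jB_{kk}-j\mu I_q)\one_q)^T\xi_1 = 0$ would contradict $\one_q^T\one_q=1$ if $\one_q$ were the sole eigenvector.
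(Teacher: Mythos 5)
Your proposal is correct and follows exactly the route the paper intends: it fills in the implication $\lambda_{2}(G_{kk})>0\implies{\rm Re}\,\lambda_{2}(G_{kk}+jB_{kk})>0$ (which the paper dismisses as ``not difficult to see'') using the same Fact~\ref{fact:the}/quadratic-form reasoning as Lemma~\ref{lem:gamma}, and then invokes Theorem~\ref{thm:mothership}. Your handling of the repeated-eigenvalue case via the complex symmetry of $G_{kk}+jB_{kk}$ is sound and matches the technique of Lemma~\ref{lem:lambda2}.
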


Consider now an array of coupled $n$-link pendulums where the
springs connect pairs of pendulums only through a particular link.
And likewise for the dampers, see Fig.~\ref{fig:pendcoupuni}. This
configuration makes a special case of \eqref{eqn:arrayeps} where the
coupling matrices are {\em commensurable}. That is, there exist
matrices $C_{\rm d}\in\Real^{m_{\rm d}\times n}$ and $C_{\rm
r}\in\Real^{m_{\rm r}\times n}$ such that for all $i,\,j$ we have
$D_{ij}=d_{ij}C_{\rm d}^{T}C_{\rm d}$ and $R_{ij}=r_{ij}C_{\rm
r}^{T}C_{\rm r}$ where $d_{ij},\,r_{ij}$ are nonnegative scalars.
This leads to the dynamics below, where the coupling enjoys a type
of uniformity,
\begin{eqnarray}\label{eqn:arrayepsuni}
M{\ddot x}_{i}+Kx_{i}+\sum_{j=1}^{q}d_{ij}C_{\rm d}^{T}C_{\rm
d}({\dot x}_{i}-{\dot x}_{j})+\varepsilon\sum_{j=1}^{q}r_{ij}C_{\rm
r}^{T}C_{\rm r}(x_{i}-x_{j})=0\,,\qquad i=1,\,2,\,\ldots,\,q\,.
\end{eqnarray}
Such uniformity makes the synchronization analysis significantly
simpler, yet not too simple to be interesting. Define the Laplacian
matrices $\ell_{\rm d},\,\ell_{\rm r}\in\setL(q,\,1)$ as
\begin{eqnarray*}
\ell_{\rm d}&:=&{\rm lap}\,(d_{ij})_{i,j=1}^{q}\,,\\
\ell_{\rm r}&:=&{\rm lap}\,(r_{ij})_{i,j=1}^{q}\,.
\end{eqnarray*}

\begin{figure}[h]
\begin{center}
\includegraphics[scale=0.55]{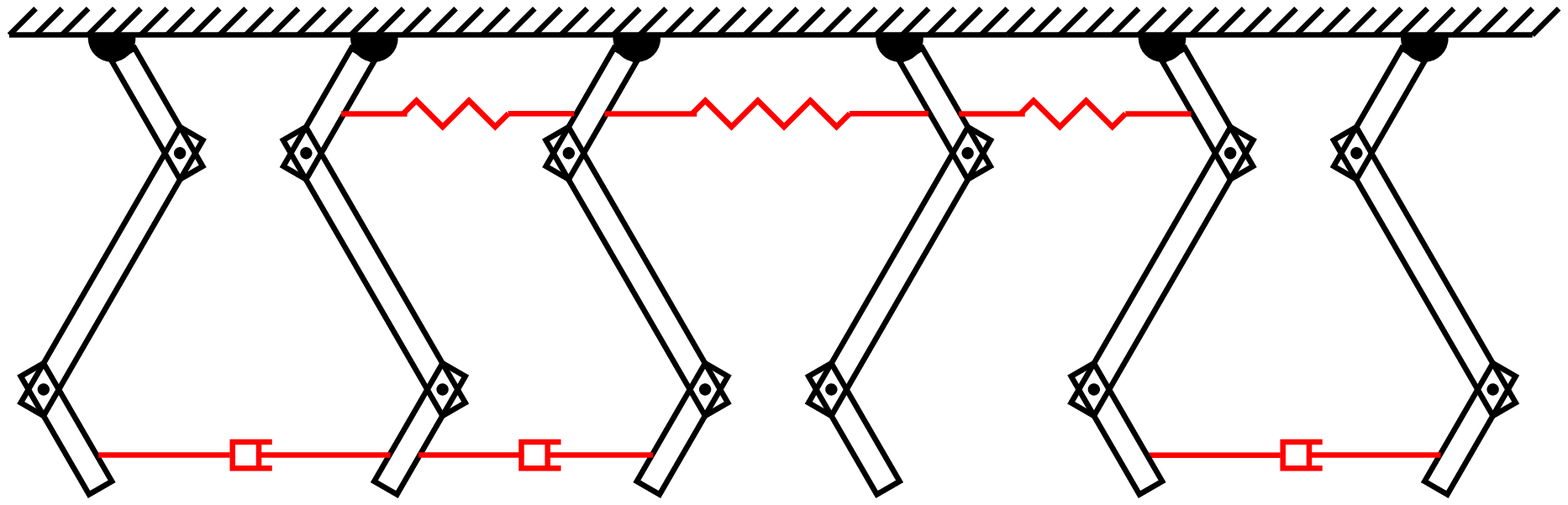}
\caption{Uniformly coupled 3-link pendulums.}\label{fig:pendcoupuni}
\end{center}
\end{figure}

\begin{corollary}\label{cor:ldpjlr}
Suppose ${\rm Re}\,\lambda_{2}(\ell_{\rm d}+j\ell_{\rm r})>0$ and
both $(C_{\rm d},\,M^{-1}K)$ and $(C_{\rm r},\,M^{-1}K)$ are
observable pairs. Then there exists $r>0$ such that the
array~\eqref{eqn:arrayepsuni} synchronizes for all
$\varepsilon\in(0,\,r)$.
\end{corollary}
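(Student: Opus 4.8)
The plan is to derive Corollary~\ref{cor:ldpjlr} from Theorem~\ref{thm:mothership}. The uniformly coupled array \eqref{eqn:arrayepsuni} is exactly \eqref{eqn:arrayeps} specialized to $D_{ij}=d_{ij}C_{\rm d}^{T}C_{\rm d}$ and $R_{ij}=r_{ij}C_{\rm r}^{T}C_{\rm r}$, so it suffices to verify the hypothesis ${\rm Re}\,\lambda_{2}(G_{kk}+jB_{kk})>0$ for all $k=1,\ldots,n$ under commensurability and observability, and then quote Theorem~\ref{thm:mothership} verbatim.

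First I would evaluate $G_{kk}$ and $B_{kk}$ for this coupling. Substituting $D_{ij}=d_{ij}C_{\rm d}^{T}C_{\rm d}$ into the formula of Lemma~\ref{lem:Gkk}, the weight $v_{k}^{T}M^{-1/2}D_{ij}M^{-1/2}v_{k}$ collapses to the scalar $d_{ij}\|C_{\rm d}M^{-1/2}v_{k}\|^{2}$, so that $G_{kk}=g_{k}\ell_{\rm d}$ with $g_{k}:=\|C_{\rm d}M^{-1/2}v_{k}\|^{2}\geq 0$; analogously $B_{kk}=b_{k}\ell_{\rm r}$ with $b_{k}:=\|C_{\rm r}M^{-1/2}v_{k}\|^{2}\geq 0$. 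Next, I would translate observability into strict positivity of these scalars: since $M^{-1}K=M^{-1/2}PM^{1/2}$ and the eigenvalues $\sigma_{k}$ of $P$ are distinct, the eigenvectors of $M^{-1}K$ are precisely the $M^{-1/2}v_{k}$, so by the PBH observability test $(C_{\rm d},M^{-1}K)$ is observable if and only if $C_{\rm d}$ annihilates none of the $M^{-1/2}v_{k}$, i.e., if and only if $g_{k}>0$ for every $k$; likewise observability of $(C_{\rm r},M^{-1}K)$ yields $b_{k}>0$ for every $k$.

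The heart of the argument is then to show that ${\rm Re}\,\lambda_{2}(\ell_{\rm d}+j\ell_{\rm r})>0$ together with $g_{k},b_{k}>0$ forces ${\rm Re}\,\lambda_{2}(g_{k}\ell_{\rm d}+jb_{k}\ell_{\rm r})>0$. I would argue by contradiction, mimicking the proof of Lemma~\ref{lem:lambda2}: if ${\rm Re}\,\lambda_{2}(g_{k}\ell_{\rm d}+jb_{k}\ell_{\rm r})\leq 0$ then by Fact~\ref{fact:the} it equals $j\mu$ for some $\mu\in\Real$, and — using complex symmetry of $g_{k}\ell_{\rm d}+jb_{k}\ell_{\rm r}$ to rule out a generalized eigenvector exactly as in Lemma~\ref{lem:lambda2} — there is a unit eigenvector $w$ with $(g_{k}\ell_{\rm d}+jb_{k}\ell_{\rm r})w=j\mu w$ and $w\notin{\rm span}\{\one_{q}\}$. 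Then $j\mu=w^{*}(g_{k}\ell_{\rm d}+jb_{k}\ell_{\rm r})w=g_{k}(w^{*}\ell_{\rm d}w)+jb_{k}(w^{*}\ell_{\rm r}w)$; taking real parts and using $g_{k}>0$ with $\ell_{\rm d}\geq 0$ gives $\ell_{\rm d}w=0$, hence $jb_{k}\ell_{\rm r}w=j\mu w$ and, since $b_{k}>0$, $\ell_{\rm r}w=(\mu/b_{k})w$. Therefore $(\ell_{\rm d}+j\ell_{\rm r})w=j(\mu/b_{k})w$, and since also $(\ell_{\rm d}+j\ell_{\rm r})\one_{q}=0$ with $w,\one_{q}$ linearly independent, $\ell_{\rm d}+j\ell_{\rm r}$ has two eigenvalues on the imaginary axis, which forces ${\rm Re}\,\lambda_{2}(\ell_{\rm d}+j\ell_{\rm r})=0$ — a contradiction. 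With ${\rm Re}\,\lambda_{2}(G_{kk}+jB_{kk})={\rm Re}\,\lambda_{2}(g_{k}\ell_{\rm d}+jb_{k}\ell_{\rm r})>0$ established for every $k$, Theorem~\ref{thm:mothership} supplies the required $r>0$.

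The main obstacle is this third step. The routine parts (unwinding the Kronecker identity of Lemma~\ref{lem:Gkk}, applying PBH) are bookkeeping; the delicate point is justifying that the surplus imaginary eigenvalue of $g_{k}\ell_{\rm d}+jb_{k}\ell_{\rm r}$ is carried by a genuine — not merely generalized — eigenvector that escapes ${\rm span}\{\one_{q}\}$, including the degenerate case $\mu=0$ in which $w$ and $\one_{q}$ share the eigenvalue $0$; here one must reuse the complex-symmetry contradiction ($\one_{q}^{T}\one_{q}=0$) of Lemma~\ref{lem:lambda2}.
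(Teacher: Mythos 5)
Your proposal is correct and follows essentially the same route as the paper: compute $G_{kk}=\|C_{\rm d}M^{-1/2}v_{k}\|^{2}\ell_{\rm d}$ and $B_{kk}=\|C_{\rm r}M^{-1/2}v_{k}\|^{2}\ell_{\rm r}$, use the PBH test on the eigenvectors $M^{-1/2}v_{k}$ of $M^{-1}K$ to get strict positivity of the scalars, prove by contradiction (via the complex-symmetry/generalized-eigenvector argument) that positive scalings preserve ${\rm Re}\,\lambda_{2}(\ell_{\rm d}+j\ell_{\rm r})>0$, and invoke Theorem~\ref{thm:mothership}. The only cosmetic difference is that the paper states the scaling implication for arbitrary $\alpha,\beta>0$ before specializing, whereas you argue it directly for the specific scalars.
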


\begin{proof}
We begin by proving the implication
\begin{eqnarray}\label{eqn:implication}
{\rm Re}\,\lambda_{2}(\ell_{\rm d}+j\ell_{\rm
r})>0\quad\implies\quad {\rm Re}\,\lambda_{2}(\alpha\ell_{\rm
d}+j\beta\ell_{\rm r})>0\ \ \mbox{for all scalars}\ \
\alpha,\,\beta>0\,.
\end{eqnarray}
Given $\alpha,\,\beta>0$, consider the matrix $\alpha\ell_{\rm
d}+j\beta\ell_{\rm r}$. Note that $\ell_{\rm d},\,\ell_{\rm r}\geq
0$ thanks to $\ell_{\rm d},\,\ell_{\rm r}\in\setL(q,\,1)$. Therefore
all the eigenvalues of $\alpha\ell_{\rm d}+j\beta\ell_{\rm r}$ are
on the closed right half plane by Fact~\ref{fact:the}. Also, since
$\ell_{\rm d}\one_{q}=0$ and $\ell_{\rm r}\one_{q}=0$, we have
$(\alpha\ell_{\rm d}+j\beta\ell_{\rm r})\one_{q}=0$. Hence, without
loss of generality, we can let $\lambda_{1}(\alpha\ell_{\rm
d}+j\beta\ell_{\rm r})=0$. Consider now the situation ${\rm
Re}\,\lambda_{2}(\alpha\ell_{\rm d}+j\beta\ell_{\rm r})\leq0$. This
implies $\lambda_{2}(\alpha\ell_{\rm d}+j\beta\ell_{\rm r})=j\gamma$
for some $\gamma\in\Real$. Let $\xi_{2}\in\Complex^{q}$ be the
corresponding unit eigenvector:
\begin{eqnarray}\label{eqn:yalincak}
(\alpha\ell_{\rm d}+j\beta\ell_{\rm r})\xi_{2}=j\gamma\xi_{2}\,.
\end{eqnarray}
If $j\gamma\neq 0$ then clearly we must have $\xi_{2}\notin{\rm
span}\,\{\one_{q}\}$. If $j\gamma=0$, on the other hand, then we can
choose $\xi_{2}\notin{\rm span}\,\{\one_{q}\}$. For if we could not
then $\one_{q}$ would have to be the only eigenvector for the
repeated eigenvalue at the origin. This would require that there
existed a generalized eigenvector $\zeta$ satisfying
$(\alpha\ell_{\rm d}+j\beta\ell_{\rm r})\zeta=\one_{q}$ which,
because $\alpha\ell_{\rm d}+j\beta\ell_{\rm r}$ is symmetric, would
lead to the following contradiction
\begin{eqnarray*}
1 =\one_{q}^{T}\one_{q}=\one_{q}^{T}(\alpha\ell_{\rm
d}+j\beta\ell_{\rm r})\zeta =((\alpha\ell_{\rm d}+j\beta\ell_{\rm
r})\one_{q})^{T}\zeta =0\,.
\end{eqnarray*}
Hence we let $\xi_{2}\notin{\rm span}\,\{\one_{q}\}$. Now,
left-multiplying \eqref{eqn:yalincak} by $\xi^{*}_{2}$ yields
$\alpha\xi^{*}_{2}\ell_{\rm d}\xi_{2}+j\beta\xi^{*}_{2}\ell_{\rm
r}\xi_{2}=j\gamma$ implying $\xi^{*}_{2}\ell_{\rm d}\xi_{2}=0$. This
in turn gives us $\ell_{\rm d}\xi_{2}=0$ because $\ell_{\rm d}\geq
0$. Therefore we have to have $\ell_{\rm
r}\xi_{2}=(\gamma/\beta)\xi_{2}$ by \eqref{eqn:yalincak}.
Consequently, $(\ell_{\rm d}+j\ell_{\rm
r})\xi_{2}=j(\gamma/\beta)\xi_{2}$. We also have $(\ell_{\rm
d}+j\ell_{\rm r})\one_{q}=0$. Since $\xi_{2}$ and $\one_{q}$ are
linearly independent, this means $\ell_{\rm d}+j\ell_{\rm r}$ has at
least two eigenvalues on the imaginary axis. Therefore we have
established ${\rm Re}\,\lambda_{2}(\alpha\ell_{\rm
d}+j\beta\ell_{\rm r})\leq0\implies {\rm Re}\,\lambda_{2}(\ell_{\rm
d}+j\ell_{\rm r})\leq0$, which gives us \eqref{eqn:implication}
because $\alpha,\,\beta$ were arbitrary.

Recall that $v_{1},\,v_{2},\,\ldots,\,v_{n}$ are the eigenvectors of
$P=M^{-1/2}KM^{-1/2}$, the corresponding eigenvalues being
$\sigma_{1},\,\sigma_{2},\,\ldots,\,\sigma_{n}$. Define now the
vectors $\tilde v_{k}=M^{-1/2}v_{k}$. These $\tilde v_{k}$ are the
eigenvectors of $M^{-1}K$ because we can write
\begin{eqnarray*}
M^{-1}K\tilde v_{k}
&=&M^{-1}KM^{-1/2}v_{k}\\
&=&M^{-1/2}(M^{-1/2}KM^{-1/2}v_{k})\\
&=&M^{-1/2}(\sigma_{k}v_{k})\\
&=&\sigma_{k}\tilde v_{k}\,.
\end{eqnarray*}
Define $\alpha_{k}=\|C_{\rm d}\tilde v_{k}\|^{2}$ and
$\beta_{k}=\|C_{\rm r}\tilde v_{k}\|^{2}$ for
$k=1,\,2,\,\ldots,\,n$. Recall $D_{ij}=d_{ij}C_{\rm d}^{T}C_{\rm d}$
and $R_{ij}=r_{ij}C_{\rm r}^{T}C_{\rm r}$. Starting from
\eqref{eqn:Gkklap} we can write
\begin{eqnarray*}
G_{kk}
&=&{\rm lap}\,(v_{k}^{T}M^{-1/2}D_{ij}M^{-1/2}v_{k})_{i,j=1}^{q}\\
&=&{\rm lap}\,(\tilde v_{k}^{T}C_{\rm d}^{T}C_{\rm d}\tilde v_{k}d_{ij})_{i,j=1}^{q}\\
&=&\|C_{\rm d}\tilde v_{k}\|^{2}{\rm lap}\,(d_{ij})_{i,j=1}^{q}\\
&=&\alpha_{k}\ell_{\rm d}\,.
\end{eqnarray*}
Likewise, we have $B_{kk}=\beta_{k}\ell_{\rm r}$. Hence, for all
$k=1,\,2,\,\ldots,\,n$,
\begin{eqnarray}\label{eqn:implication2}
G_{kk}+jB_{kk}=\alpha_{k}\ell_{\rm d}+j\beta_{k}\ell_{\rm r}\,.
\end{eqnarray}

Suppose now ${\rm Re}\,\lambda_{2}(\ell_{\rm d}+j\ell_{\rm r})>0$
and both $(C_{\rm d},\,M^{-1}K)$ and $(C_{\rm r},\,M^{-1}K)$ are
observable pairs. By PBH observability condition \cite{hespanha09}
we have to have $C_{\rm d}\tilde v_{k}\neq 0$ and $C_{\rm r}\tilde
v_{k}\neq 0$ for all $k=1,\,2,\,\ldots,\,n$. This means
$\alpha_{k},\,\beta_{k}>0$. The result then follows by
\eqref{eqn:implication}, \eqref{eqn:implication2}, and
Theorem~\ref{thm:mothership}.
\end{proof}

\section{Pure dissipative coupling}\label{sec:PD}

In the last part of our analysis we dispense with the restorative
coupling (e.g., springs connecting the pendulums) altogether and
focus on the special case of \eqref{eqn:array} where all $R_{ij}=0$.
This is the case where the coupling is purely dissipative:
\begin{eqnarray}\label{eqn:arraypuredis}
M{\ddot x}_{i}+Kx_{i}+\sum_{j=1}^{q}D_{ij}({\dot x}_{i}-{\dot
x}_{j})=0\,,\qquad i=1,\,2,\,\ldots,\,q\,.
\end{eqnarray}
The next result is closely related to \cite[Cor.~1]{tuna18}.

\begin{theorem}
The array~\eqref{eqn:arraypuredis} synchronizes if and only if
$\lambda_{2}(G_{kk})>0$ for all $k=1,\,2,\,\ldots,\,n$.
\end{theorem}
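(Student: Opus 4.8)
The plan is to transport the criterion of Theorem~\ref{thm:main} to the block-structured matrix $\Omega_0:=G+j[\Lambda\otimes I_q]$ and then read off which eigenvalues land on the imaginary axis, block by block. Since \eqref{eqn:arraypuredis} is exactly \eqref{eqn:arrayeps} with $\varepsilon=0$, Proposition~\ref{prop:swap} together with Remark~\ref{rem:observe} tells us that the array synchronizes if and only if ${\rm Re}\,\lambda_{n+1}(\Omega_0)>0$. Recall that $[e_k\otimes\one_q]$ is an eigenvector of $\Omega_0$ with eigenvalue $j\sigma_k$ for each $k=1,\ldots,n$, that by Fact~\ref{fact:the} no eigenvalue of $\Omega_0$ has negative real part, and that accordingly $\lambda_k(\Omega_0)=j\sigma_k$ for $k=1,\ldots,n$; the whole question is whether $\lambda_{n+1}(\Omega_0)$ is also purely imaginary.

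The computational heart of the argument is a single observation: since $G\geq 0$ and, by Lemma~\ref{lem:Gkk}, $G_{kk}\geq 0$, a vector of the special form $[e_k\otimes w]$ satisfies $[e_k\otimes w]^{*}G[e_k\otimes w]=w^{*}G_{kk}w$, so $w^{*}G_{kk}w=0$ forces $G_{kk}w=0$ and, conversely, $G_{kk}w=0$ forces $G[e_k\otimes w]=0$. Since moreover $[\Lambda\otimes I_q][e_k\otimes w]=\sigma_k[e_k\otimes w]$, we obtain: $[e_k\otimes w]$ is an eigenvector of $\Omega_0$ with eigenvalue $j\sigma_k$ exactly when $w\in\ker G_{kk}$. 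Because $G_{kk}$ is Laplacian we have $\one_q\in\ker G_{kk}$, so $\lambda_2(G_{kk})>0$ is equivalent to $\dim\ker G_{kk}=1$, and $\lambda_2(G_{kk})=0$ to $\dim\ker G_{kk}\geq 2$.

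For necessity I would argue the contrapositive. If $\lambda_2(G_{k_0k_0})=0$ for some $k_0$, choose a unit $w\in\ker G_{k_0k_0}$ orthogonal to $\one_q$; then $[e_{k_0}\otimes w]$ is an eigenvector of $\Omega_0$ on the imaginary axis, and comparing block patterns shows it is linearly independent of $[e_1\otimes\one_q],\ldots,[e_n\otimes\one_q]$. Thus $\Omega_0$ has at least $n+1$ linearly independent eigenvectors whose eigenvalues are purely imaginary, whence ${\rm Re}\,\lambda_{n+1}(\Omega_0)=0$ (it is $\leq 0$ by that count and $\geq 0$ by Fact~\ref{fact:the}), so the array does not synchronize.

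For sufficiency I would again take the contrapositive, reusing the mechanism of the proof of Lemma~\ref{lem:lambda2}. If ${\rm Re}\,\lambda_{n+1}(\Omega_0)\leq 0$ then it equals $0$; writing $\lambda_{n+1}(\Omega_0)=j\beta$ with $\beta\in\Real$, that same argument (which invokes $\Omega_0^{T}=\Omega_0$ to exclude a generalized eigenvector on top of $[e_k\otimes\one_q]$ in the case $j\beta=j\sigma_k$) produces a unit eigenvector $\xi$ with $\Omega_0\xi=j\beta\xi$ and $\xi\notin{\rm span}\{[e_1\otimes\one_q],\ldots,[e_n\otimes\one_q]\}$. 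From $j\beta=\xi^{*}G\xi+j\xi^{*}[\Lambda\otimes I_q]\xi$ we get $G\xi=0$ and $[\Lambda\otimes I_q]\xi=\beta\xi$; distinctness of the $\sigma_k$ then forces $\xi=[e_{k_0}\otimes u]$ for a single index $k_0$, with $\beta=\sigma_{k_0}$ and $\|u\|=1$, and $G\xi=0$ gives $G_{k_0k_0}u=0$. Since $\xi$ lies outside the synchronization subspace, $u$ and $\one_q$ are linearly independent, so $\dim\ker G_{k_0k_0}\geq 2$, i.e., $\lambda_2(G_{k_0k_0})=0$. I expect the only point requiring care to be this extraction of a transverse eigenvector from the bare fact that $\lambda_{n+1}$ sits on the imaginary axis; but it is routine here, since it repeats verbatim the corresponding step of Lemma~\ref{lem:lambda2}.
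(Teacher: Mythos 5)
Your proof is correct and follows essentially the same route as the paper's: reduce to ${\rm Re}\,\lambda_{n+1}(\Omega_{0})>0$ via Proposition~\ref{prop:swap} and Remark~\ref{rem:observe}, observe that purely imaginary eigenvalues force eigenvectors of the block form $[e_{k}\otimes w]$ with $w\in{\rm null}\,G_{kk}$, and count kernel dimensions against the known eigenvectors $[e_{k}\otimes\one_{q}]$. The only cosmetic difference is that you secure $\xi\notin{\rm span}\{[e_{k}\otimes\one_{q}]\}$ up front by citing the mechanism of Lemma~\ref{lem:lambda2}, whereas the paper first derives the block form and then rules out $w\in{\rm span}\{\one_{q}\}$ with the same symmetric-matrix generalized-eigenvector contradiction; these are interchangeable.
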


\begin{proof}
Define the matrix $\Omega_{0}=G+j[\Lambda\otimes I_{q}]$. Note that
the array~\eqref{eqn:arraypuredis} synchronizes if and only if ${\rm
Re}\,\lambda_{n+1}(\Omega_{0})>0$ thanks to
Remark~\ref{rem:observe}. Some of our earlier arguments on
$\Omega_{\varepsilon}$ are valid also on $\Omega_{0}$. By those
arguments we see that $\Omega_{0}[e_{k}\otimes
\one_{q}]=j\sigma_{k}[e_{k}\otimes \one_{q}]$ for
$k=1,\,2,\,\ldots,\,n$. That is, each $[e_{k}\otimes \one_{q}]$ is
an eigenvector, the corresponding eigenvalue being $j\sigma_{k}$.
Also, all the eigenvalues of $\Omega_{0}$ are on the closed right
half plane by Fact~\ref{fact:the}. Therefore we can let, without
loss of generality, $\lambda_{k}(\Omega_{0})=j\sigma_{k}$ for
$k=1,\,2\,\ldots,\,n$.

Suppose the array~\eqref{eqn:arraypuredis} fails to synchronize.
This implies $\lambda_{n+1}(\Omega_{0})=j\beta$ for some
$\beta\in\Real$. Let $\xi\in(\Complex^{q})^{n}$ be the corresponding
unit eigenvector. We can write
$j\beta=\xi^{*}G\xi+j\xi^{*}[\Lambda\otimes I_{q}]\xi$ by
\eqref{eqn:lambda}. This tells us (since $G\geq 0$ and
$[\Lambda\otimes I_{q}]>0$) that $\xi^{*}G\xi=0$ and, consequently,
$G\xi=0$. Therefore $[\Lambda\otimes I_{q}]\xi=\beta\xi$. That is,
$\xi$ is an eigenvector of $[\Lambda\otimes I_{q}]$ and $\beta$ an
eigenvalue. Now, $\Lambda={\rm
diag}\,(\sigma_{1},\,\sigma_{2},\,\ldots,\,\sigma_{n})$ implies
$\beta\in\{\sigma_{1},\,\sigma_{2},\,\ldots,\,\sigma_{n}\}$. Without
loss of generality let us take $\beta=\sigma_{1}$. Then $\xi$ has to
have the form $\xi=[e_{1}\otimes w]$ for some $w\in\Complex^{q}$.
Again without loss of generality we can further assume $w\notin{\rm
span}\,\{\one_{q}\}$. Generality is not lost; for, otherwise,
$[e_{1}\otimes \one_{q}]$ would be the only eigenvector of
$\Omega_{0}$ for the repeated eigenvalue $j\sigma_{1}$, which would
require the existence of a generalized eigenvector $\zeta$
satisfying $(\Omega_{0}-j\sigma_{1}I_{qn})\zeta=[e_{1}\otimes
\one_{q}]$. This however yields the contradiction below because
$\Omega_{0}^{T}=\Omega_{0}$
\begin{eqnarray*}
1=[e_{1}\otimes\one_{q}]^{T}[e_{1}\otimes\one_{q}]=[e_{1}\otimes\one_{q}]^{T}(\Omega_{0}-j\sigma_{1}I_{qn})\zeta
=((\Omega_{0}-j\sigma_{1}I_{qn})[e_{1}\otimes\one_{q}])^{T}\zeta=0\,.
\end{eqnarray*}
Note that we have
\begin{eqnarray*}
w^{*}G_{11}w=[e_{1}\otimes w]^{*}G[e_{1}\otimes w]=\xi^{*}G\xi=0
\end{eqnarray*}
implying $G_{11}w=0$ because $G_{11}\geq 0$ thanks to
$G_{11}\in\setL(q,\,1)$ by Lemma~\ref{lem:Gkk}. Furthermore,
$G_{11}\one_{q}=0$. Since the set $\{w,\,\one_{q}\}$ is linearly
independent the eigenvalue of $G_{11}$ at the origin must be
repeated. This means $\lambda_{2}(G_{11})=0$ because $G_{11}\geq 0$.

To show the other direction let us suppose this time that
$\lambda_{2}(G_{\ell\ell})\leq 0$ for some $\ell$. Being a Laplacian
matrix, $G_{\ell\ell}\geq 0$ and $G_{\ell\ell}\one_{q}=0$. Therefore
the eigenvalue at the origin is repeated and there exists a vector
$u\notin{\rm span}\,\{\one_{q}\}$ satisfying $G_{\ell\ell}u=0$.
Construct now the vector $\eta=[e_{\ell}\otimes u]$. Clearly, this
vector satisfies
\begin{eqnarray}\label{eqn:linind}
\eta\notin{\rm span}\,\{[e_{1}\otimes \one_{q}],\,[e_{2}\otimes
\one_{q}],\,\ldots,\,[e_{n}\otimes \one_{q}]\}\,.
\end{eqnarray}
Moreover,
\begin{eqnarray*}
\eta^{*}G\eta=[e_{\ell}\otimes u]^{*}G[e_{\ell}\otimes
u]=u^{*}G_{\ell\ell}u=0
\end{eqnarray*}
which, since $G\geq 0$, implies $G\eta=0$. This allows us to see
that $\eta$ is an eigenvector of $\Omega_{0}$ because
\begin{eqnarray}\label{eqn:linind2}
\Omega_{0}\eta
&=&(G+j[\Lambda\otimes I_{q}])\eta\nonumber\\
&=&j[\Lambda\otimes I_{q}][e_{\ell}\otimes u]\nonumber\\
&=&j[(\Lambda e_{\ell})\otimes (I_{q}u)]\nonumber\\
&=&j[(\sigma_{\ell} e_{\ell})\otimes (I_{q}u)]\nonumber\\
&=&j\sigma_{\ell}[e_{\ell}\otimes u]\nonumber\\
&=&j\sigma_{\ell}\eta\,.
\end{eqnarray}
Now, \eqref{eqn:linind} and \eqref{eqn:linind2} tell us that
$\Omega_{0}$ has at least $n+1$ linearly independent eigenvectors
whose eigenvalues lie on the imaginary axis. But this implies ${\rm
Re}\,\lambda_{n+1}(\Omega_{0})=0$. Hence the result.
\end{proof}

\vspace{0.12in}

Consider now the scenario where the coupling in the
array~\eqref{eqn:arraypuredis} is uniform. That is, there exists a
matrix $C_{\rm d}\in\Real^{m_{\rm d}\times n}$ such that
$D_{ij}=d_{ij}C_{\rm d}^{T}C_{\rm d}$ where $d_{ij}$ are nonnegative
scalars. Under this condition the array dynamics take the form
\begin{eqnarray}\label{eqn:arraypuredisuni}
M{\ddot x}_{i}+Kx_{i}+\sum_{j=1}^{q}d_{ij}C_{\rm d}^{T}C_{\rm
d}({\dot x}_{i}-{\dot x}_{j})=0\,,\qquad i=1,\,2,\,\ldots,\,q\,.
\end{eqnarray}
The coupling of this array is represented by two parameters: the
Laplacian matrix $\ell_{\rm d}={\rm lap}\,(d_{ij})_{i,j=1}^{q}$ and
the output matrix $C_{\rm d}$. How they are linked to
synchronization is stated next.

\begin{corollary}
The array~\eqref{eqn:arraypuredisuni} synchronizes if and only if
$\lambda_{2}(\ell_{\rm d})>0$ and $(C_{\rm d},\,M^{-1}K)$ is
observable.
\end{corollary}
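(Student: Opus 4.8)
The plan is to combine the previous theorem (synchronization of \eqref{eqn:arraypuredis} iff $\lambda_{2}(G_{kk})>0$ for all $k$) with the commensurability analysis carried out in the proof of Corollary~\ref{cor:ldpjlr}. Indeed, \eqref{eqn:arraypuredisuni} is exactly \eqref{eqn:arraypuredis} with the choice $D_{ij}=d_{ij}C_{\rm d}^{T}C_{\rm d}$, so the previous theorem applies verbatim: the array synchronizes if and only if $\lambda_{2}(G_{kk})>0$ for every $k=1,\,2,\,\ldots,\,n$. The whole task therefore reduces to showing that, in the commensurable case, the condition ``$\lambda_{2}(G_{kk})>0$ for all $k$'' is equivalent to the conjunction ``$\lambda_{2}(\ell_{\rm d})>0$ and $(C_{\rm d},\,M^{-1}K)$ observable''.

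First I would recall from the proof of Corollary~\ref{cor:ldpjlr} the identity $G_{kk}=\alpha_{k}\ell_{\rm d}$, where $\alpha_{k}=\|C_{\rm d}\tilde v_{k}\|^{2}$ and $\tilde v_{k}=M^{-1/2}v_{k}$ are the eigenvectors of $M^{-1}K$; this computation did not use the restorative coupling at all, so it carries over unchanged. Since $\ell_{\rm d}\geq 0$ is a scalar-weighted Laplacian, $\lambda_{2}(G_{kk})=\alpha_{k}\lambda_{2}(\ell_{\rm d})$ whenever $\alpha_{k}>0$, and $G_{kk}=0$ (hence $\lambda_{2}(G_{kk})=0$) when $\alpha_{k}=0$. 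Consequently $\lambda_{2}(G_{kk})>0$ holds for all $k$ if and only if $\lambda_{2}(\ell_{\rm d})>0$ \emph{and} $\alpha_{k}>0$ for all $k$, i.e. $C_{\rm d}\tilde v_{k}\neq 0$ for every $k$. By the PBH observability test \cite{hespanha09} applied to the pair $(C_{\rm d},\,M^{-1}K)$---whose eigenvectors are precisely $\tilde v_{1},\,\ldots,\,\tilde v_{n}$---the latter condition is exactly observability of $(C_{\rm d},\,M^{-1}K)$. Chaining these equivalences with the previous theorem gives the claim.

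I do not anticipate a genuine obstacle here; the corollary is a routine specialization. The only point requiring a little care is the direction where $\alpha_{k}=0$ for some $k$: one must observe that then $G_{kk}$ is the zero matrix, so its second smallest eigenvalue is $0$ and the synchronization criterion fails regardless of how well-connected $\ell_{\rm d}$ is---this is the step that shows observability is genuinely necessary, not merely sufficient. Conversely, when observability holds all $\alpha_{k}$ are strictly positive, so the scaling $G_{kk}=\alpha_{k}\ell_{\rm d}$ faithfully transfers the positivity of $\lambda_{2}(\ell_{\rm d})$ to every $\lambda_{2}(G_{kk})$.
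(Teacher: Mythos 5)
Your proposal is correct and is exactly the argument the paper intends: the paper's own proof is just the one-line remark that ``the demonstration is similar to that of Corollary~\ref{cor:ldpjlr},'' and what you write out---the reduction via the pure-dissipative theorem to $\lambda_{2}(G_{kk})>0$ for all $k$, the identity $G_{kk}=\alpha_{k}\ell_{\rm d}$ with $\alpha_{k}=\|C_{\rm d}\tilde v_{k}\|^{2}$, and the PBH characterization of observability as $\alpha_{k}>0$ for all $k$---is precisely the content of that reference. Your explicit treatment of the case $\alpha_{k}=0$ (giving $G_{kk}=0$, hence $\lambda_{2}(G_{kk})=0$) is the right way to see that observability is necessary and not merely sufficient.
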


\begin{proof}
The demonstration is similar to that of Corollary~\ref{cor:ldpjlr}.
\end{proof}

\section{Conclusion}

In this paper we studied the problem of synchronization in an array
of identical oscillators subject to both dissipative and restorative
coupling. We presented a simple way to combine the pair of
matrix-weighted Laplacians (one representing the dissipative, the
other the restorative coupling) in a single complex-valued matrix
and established an equivalence relation between a certain spectral
property of this matrix and the collective behavior of the
oscillators. Also, we projected this method to generate more refined
conditions for synchronization applicable when the restorative
coupling is either weak or absent altogether.

\bibliographystyle{plain}
\bibliography{references}
\end{document}